\documentclass[a4paper, 11pt]{amsart}

\usepackage{amsmath, amssymb, amsthm, mathrsfs, hyperref, color, enumerate, tikz, calligra, tikz-cd, mathtools, array}
\usepackage[capitalize]{cleveref}
\usepackage[all]{xy}
\usetikzlibrary{calc, matrix}

\newcommand{\R}{\mathbb{R}}
\newcommand{\C}{\mathbb{C}}
\newcommand{\Z}{\mathbb{Z}}
\renewcommand{\P}{\mathbb{P}}
\newcommand{\Hom}{\operatorname{Hom}}
\newcommand{\sheafHom}{\operatorname{\mathscr{H}\text{\kern -3pt {\calligra\large om}}\,}}
\newcommand{\ev}{\operatorname{ev}}
\newcommand{\ssvdots}{\raisebox{-1ex}{\smash{\vdots}}} % for spectral sequence vdots
\newcommand{\CO}{\operatorname{\mathcal{CO}^0}}
\newcommand{\at}[2]{\left.#1\right|_{#2}}
\renewcommand{\phi}{\varphi}

\addtolength{\hoffset}{-2cm}
\addtolength{\textwidth}{4cm}
\addtolength{\voffset}{-1.3cm}
\addtolength{\textheight}{2.2cm}

\theoremstyle{plain}
\newtheorem{thm}{Theorem}
\newtheorem{lem}[thm]{Lemma}
\newtheorem{prop}[thm]{Proposition}
\newtheorem{cor}[thm]{Corollary}

\theoremstyle{remark}
\newtheorem{rmk}[thm]{Remark}

\title{Monotone Lagrangians in $\C\P^n$ of minimal Maslov number $n+1$}
\author{Momchil Konstantinov}
\address{Department of Mathematics, University College London, Gower Street, London, WC1E  6BT}
\email{momchil.konstantinov.14@ucl.ac.uk}
\author{Jack Smith}
\address{St John's College, Cambridge, CB2 1TP}
\email{j.smith@dpmms.cam.ac.uk}

\begin{document}
\begin{abstract}
We show that a monotone Lagrangian $L$ in $\C\P^n$ of minimal Maslov number $n + 1$ is homeomorphic to a double quotient of a sphere, and thus homotopy equivalent to $\R\P^n$. To prove this we use Zapolsky's canonical pearl complex for $L$ with coefficients in $\Z$, and various twisted versions thereof, where the twisting is determined by connected covers of $L$. The main tool is the action of the quantum cohomology of $\C\P^n$ on the resulting Floer homologies. 
\end{abstract}
\maketitle

\section{Introduction}

\subsection{Background and statement of results}

Fix a positive integer $n$ and suppose $L \subset \C\P^n$ is a closed, connected, monotone Lagrangian submanifold of minimal Maslov number $N_L = n + 1$ (see \cref{section soft concepts} below for a review of the Maslov index and monotonicity).  It is well-known, following Seidel \cite{SeidelGraded}, that this is the maximal possible value of $N_L$ for a monotone Lagrangian in projective space.  It is attained by the standard $\R\P^n \subset \C\P^n$, but up to Hamiltonian isotopy there are no other known examples. In this note we show the following:

\begin{thm}
\label{Theorem1}
Let $L \subset \C\P^n$ be a closed, connected, monotone Lagrangian submanifold satisfying $N_L = n + 1$. Then
$L$ has fundamental group $\Z/2$ and its universal cover is homeomorphic to $S^n$.
\end{thm}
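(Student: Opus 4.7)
The plan is to combine Zapolsky's pearl complex with the action of the ambient quantum cohomology via the closed-open map. Let $\Lambda$ denote an appropriate Novikov ring, so that $QH^*(\C\P^n; \Lambda) \cong \Lambda[h]/(h^{n+1} - T)$ with $T \in \Lambda^\times$, and recall that the closed-open map
\[
\CO \colon QH^*(\C\P^n; \Lambda) \longrightarrow HF^*(L,L; \Lambda)
\]
is a unital ring homomorphism. Since $T$ is a unit, the relation $h^{n+1} = T$ forces $\CO(h) \in HF^2(L,L; \Lambda)$ to be invertible. Thus multiplication by $\CO(h)$ exhibits $HF^*(L,L; \Lambda)$ as periodic of period $2$ in its $\Z/N_L = \Z/(n+1)$ grading.

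Next I would exploit the sharp inequality $N_L = n+1 > \dim L$: each disk shifts Morse grading by $1 - N_L = -n$, so the pearl differential decomposes as the Morse differential plus at most a single correction $H^n(L) \to H^0(L)$; no multi-disk terms can appear for degree reasons. The Morse-to-Floer spectral sequence therefore identifies $HF^k \cong H^k(L)$ for Morse degrees $k \in \{1, \ldots, n-1\}$. The one possible quantum correction must vanish, for otherwise $HF^0(L,L; \Z/2) = 0$, which by the $2$-periodicity would imply $HF^2 = 0$ and contradict the invertibility of $\CO(h)$. Combining this with the periodicity forces $b_k(L; \Z/2) = 1$ for every $k$, so $L$ has the $\Z/2$-cohomology of $\R\P^n$.

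Pinning down $\pi_1(L)$ precisely is the technical heart of the proof. I would repeat the above with Zapolsky's pearl complex twisted by a local system $\Z[\pi_1(L)/H]$ associated to a connected cover of $L$, for various finite-index subgroups $H \leq \pi_1(L)$. The closed-open map still yields an invertible degree-$2$ element, the Morse-to-Floer comparison still degenerates in middle degrees, and rank computations for the twisted complex relate directly to the integer cohomology of the cover. Matching these against the quantum-periodicity constraints should eliminate all candidates for $\pi_1(L)$ other than $\Z/2$. The main obstacle here is that $\Z/2$-coefficient data alone cannot distinguish $\R\P^n$ from, for instance, a lens space of odd order, so the full integer refinement of the pearl complex, used across many different local systems in parallel, is essential.

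Finally, with $\pi_1(L) = \Z/2$ and the $\Z$-cohomology of $L$ determined, the universal cover $\widetilde L$ is a closed simply connected $n$-manifold with the integer cohomology of $S^n$, hence a homotopy sphere. By the generalized Poincaré conjecture, valid topologically in every dimension, $\widetilde L$ is homeomorphic to $S^n$, completing the proof.
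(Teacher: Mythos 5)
Your overall strategy (invertibility of $\CO(h)$, degeneration of the pearl spectral sequence in intermediate degrees, twisting by covers, and the topological Poincar\'e conjecture at the end) is the same as the paper's, but three essential steps are missing or wrong. First, your argument that the quantum correction $H^n(L;\Z/2)\to H^0(L;\Z/2)$ vanishes does not work: if that differential were nonzero, the unit would die and $HF^*(L,L;\Z/2)$ would be the \emph{zero} ring, in which $\CO(h)$ is still trivially invertible, so there is no contradiction with periodicity. What is actually needed is an independent proof that $HF^*(L,L;\Z/2)\neq 0$; the paper gets this from the purely topological fact that $H^1(L;\Z/2)\neq 0$ (because $I_{\mu_L}/(n+1)$ reduces mod $2$ to a nonzero class killed on spherical classes, hence coming from $H^1(L;\Z/2)$), and this class survives the spectral sequence for degree reasons. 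Second, you invoke ``the full integer refinement of the pearl complex'' without justifying that it exists: for even $n$ the Lagrangian is non-orientable, and integer coefficients in Zapolsky's framework require assumption (O), which the paper establishes by first proving over $\Z/2$ that $i^*:H^2(\C\P^n;\Z/2)\to H^2(L;\Z/2)$ is an isomorphism (via the $\CO$ diagram), so that $L$ is relatively pin. Your proposal never produces this input.

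Third, and most seriously, ``matching rank computations against quantum-periodicity constraints'' is not an argument that pins down $\pi_1(L)$; periodicity alone cannot do it, since for instance it cannot rule out torsion-free but $2$-periodic intermediate cohomology of a cover. The missing mechanism is a \emph{torsion} statement: the Auroux--Kontsevich--Seidel criterion gives $\CO(2(n+1)h)=2\,\CO(c_1)=0$, whence $HF^*_{\mathrm{Zap},\mu}(L,L;\Z)$ is $2(n+1)$-torsion and so $H^1(L;\Z)=0$; then a soft computation shows $i^*h$ has order exactly $2$ in $H^2(L;\Z)$, so $\CO(2h)=0$ and every lifted Floer group $HF^*_{\mathrm{Zap},\mu}(L,L';\Z)$ is $2$-torsion as a module over $HF^*_{\mathrm{Zap},\mu}(L,L;\Z)$. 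This makes $H^1_c(L';\Z)\cong H_1(L';\Z)$ $2$-torsion for every connected cover $L'$ of the orientation cover, hence every element of $\pi_1$ of the orientation cover has order $2$; the group is therefore abelian, finite, and (using $H^1(L;\Z/2)\cong\Z/2$ in the orientable case, or the free action on an even-dimensional sphere in the non-orientable case) equal to $\Z/2$. Only after finiteness is established is $\widetilde L$ compact, and only after the $2$-torsion statement does $H^2(\widetilde L;\Z)$ (torsion-free by Hurewicz and universal coefficients) vanish, making $\widetilde L$ an integral homology sphere. Without these ingredients your final two paragraphs do not close.
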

\noindent
Combined with \cite[Lemma 3]{HirschMilnorInvolutionsOfSpheres}, \cref{Theorem1} immediately implies:
\begin{cor}\label{corollary homotopy equivalence}
$L$ is homotopy equivalent to $\R\P^n$.
\end{cor}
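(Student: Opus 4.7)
The plan is to deduce the corollary directly by feeding the output of Theorem \ref{Theorem1} into the cited lemma of Hirsch--Milnor. Theorem \ref{Theorem1} produces a connected space $\widetilde{L}$ homeomorphic to $S^n$ together with a free action of $\pi_1(L) = \Z/2$ by deck transformations, and $L$ is recovered as the orbit space $\widetilde{L}/(\Z/2)$. The deck transformation generator is a fixed-point-free involution on a topological $n$-sphere.

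The second step is to invoke \cite[Lemma 3]{HirschMilnorInvolutionsOfSpheres}, which says that any free (topological) involution $\tau$ on $S^n$ has orbit space $S^n/\tau$ homotopy equivalent to the standard $\R\P^n$. Applying this to our deck transformation yields $L \simeq \R\P^n$, which is exactly the statement of the corollary.

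I expect no substantive obstacle at this stage: both inputs are treated as given, and the argument is purely a matter of citation and bookkeeping. The only care needed is to verify that the homeomorphism $\widetilde{L} \cong S^n$ provided by Theorem \ref{Theorem1} is genuinely in the topological category (so that the Hirsch--Milnor result applies), and that the deck transformation, being a homeomorphism of $\widetilde{L}$ transported across this identification, furnishes a free involution of $S^n$ in the sense required by their lemma. Both of these are immediate from the construction of the universal cover. The genuinely hard work is all concentrated in Theorem \ref{Theorem1}; the corollary itself is a one-line consequence.
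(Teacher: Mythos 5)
Your proposal is correct and is exactly the paper's argument: the paper deduces the corollary immediately from \cref{Theorem1} together with \cite[Lemma 3]{HirschMilnorInvolutionsOfSpheres}, viewing $L$ as the quotient of $\widetilde{L}\cong S^n$ by the free involution given by the deck transformation. No further comment is needed.
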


This constitutes a step towards answering a question posed by Biran and Cornea in \cite[Section 6.2.5]{BiranCorneaQS}, informally asking whether a Lagrangian $L$ in $\C\P^n$ which ``looks like'' $\R\P^n$ must be (diffeomorphic to, Hamiltonian isotopic to) $\R\P^n$; some history of this problem is discussed in \cref{subsection previous results}. For us ``looks like'' will always mean that it is monotone of minimal Maslov number $n + 1$.  When $n = 1$ or $2$, the answer is as strong as possible: any such Lagrangian is Hamiltonian isotopic to $\R\P^n$.  This is trivial for $n=1$, whilst the $n = 2$ case follows from recent work of Borman--Li--Wu \cite[Theorem 1.3]{BormanLiWuSphericalLags}. Our \cref{Theorem1} allows us to prove diffeomorphism for $n = 3$:
\begin{cor}
Let $L\subset \C\P^3$ be a monotone Lagrangian of minimal Maslov number $4$. Then $L$ is diffeomorphic to $\R\P^3$. 
\end{cor}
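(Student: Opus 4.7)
The plan is to upgrade the conclusions of \cref{Theorem1} to a smooth classification using the topology of closed $3$-manifolds, without doing any further Floer theory.

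By \cref{Theorem1}, $L$ is a closed, connected $3$-manifold with $\pi_1(L) \cong \Z/2$ and universal cover homeomorphic to $S^3$. In particular $L$ has finite fundamental group, so I would invoke the elliptization conjecture (proved by Perelman as part of his work on geometrization): every closed $3$-manifold with finite fundamental group is a spherical space form, i.e.\ diffeomorphic to $S^3/\Gamma$ for some finite subgroup $\Gamma \subset O(4)$ acting freely on $S^3$. This immediately gives that $L \cong S^3/\Gamma$ smoothly for some $\Gamma$ with $|\Gamma| = 2$.

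It then remains to identify $\Gamma$. Any order-two element $A \in O(4)$ is diagonalisable over $\R$ with eigenvalues in $\{\pm 1\}$, and freeness of the action on $S^3$ means $A$ fixes no unit vector, forcing every eigenvalue to be $-1$ and hence $A = -I$. Therefore $\Gamma = \{\pm I\}$ and $L$ is diffeomorphic to $S^3/\{\pm I\} = \R\P^3$.

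The main (indeed, essentially only) obstacle is the invocation of Perelman's theorem; after that, all that remains is elementary linear algebra. One could in principle avoid geometrization by combining Livesay's topological classification of free involutions of $S^3$ (they are all conjugate to the antipodal map) with Moise's theorem that every topological $3$-manifold admits a unique smooth structure, but this route is scarcely shorter and appeals to machinery of comparable depth.
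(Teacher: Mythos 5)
Your proposal is correct, but your primary argument reaches the smooth classification by a genuinely different route from the paper. The paper quotes Livesay's theorem that every fixed-point-free involution of $S^3$ is topologically conjugate to the antipodal map, which gives $L$ homeomorphic to $\R\P^3$, and then upgrades to a diffeomorphism via the equivalence of the topological and smooth categories in dimension $3$ (Moise) --- this is precisely the alternative you sketch in your final paragraph. Your main argument instead invokes Perelman's elliptization theorem to realise $L$ smoothly as a spherical space form $S^3/\Gamma$, followed by the (correct) linear-algebra observation that the only order-two isometry acting freely is $-I$. Both routes are valid. Elliptization is the heavier input, but it buys you two things: it produces a diffeomorphism directly, with no separate appeal to Moise; and it needs only the finiteness of $\pi_1(L)$ from \cref{Theorem1}, not the homeomorphism $\widetilde{L} \cong S^3$ --- which for odd $n$ the paper itself obtains only by invoking the Poincar\'e conjecture, so Perelman's work is already implicit in the input you are granted. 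One small quibble: a finite group acting freely on $S^3$ by isometries necessarily lies in $SO(4)$ rather than merely $O(4)$, since an orthogonal $4\times 4$ matrix of determinant $-1$ always has $+1$ as an eigenvalue and hence a fixed point on the sphere; this does not affect your conclusion, as your eigenvalue argument correctly forces the order-two generator to be $-I$ in any case.
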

\begin{proof}
By \cite[Theorem 3]{LivesayInvolutionsOfS3} any fixed-point-free involution on $S^3$ is conjugate to the antipodal map by a homeomorphism, and in dimension $3$ the topological and smooth categories are equivalent.
\end{proof}

However, we cannot easily upgrade homotopy equivalence to diffeomorphism for $n \geq 4$ as the papers \cite{CappellShaneson} ($n=4$) and \cite{HirschMilnorInvolutionsOfSpheres} ($n \geq 5$) show.

We end this discussion by noting that \cref{corollary homotopy equivalence} implies a version of the nearby Lagrangian conjecture for $\R\P^n$:

\begin{cor}
\label{corollary nearby Lagrangian}
Any closed, connected, exact Lagrangian in $T^*\R\P^n$ with vanishing Maslov class is homotopy equivalent to the zero section.
\end{cor}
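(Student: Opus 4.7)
The plan is to reduce to \cref{corollary homotopy equivalence} by embedding $L$ as a monotone Lagrangian of minimal Maslov number $n+1$ in $\C\P^n$, to which the earlier result then applies.

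First, by the Weinstein neighbourhood theorem applied to $\R\P^n \subset \C\P^n$, there is a symplectic embedding $\iota \colon U \hookrightarrow \C\P^n$ of some neighbourhood $U$ of the zero section in $T^*\R\P^n$, sending the zero section to $\R\P^n$. Using the Liouville flow on $T^*\R\P^n$---which preserves both exactness of $L$ and vanishing of its Maslov class---we rescale $L$ so that $L \subset U$, and set $\tilde L = \iota(L) \subset \C\P^n$.

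The main step is then to verify that $\tilde L$ is monotone with $N_{\tilde L} = n+1$. For any disc $u \colon (D^2, \partial D^2) \to (\C\P^n, \tilde L)$, consider the image of $\partial u$ in $\pi_1(\R\P^n) = \Z/2$ under the retraction $U \to \R\P^n$. If this image is trivial, then $\partial u$ bounds a disc $v$ inside $U$; exactness and the vanishing Maslov class of $L$, transported through the Weinstein identification, force $\omega(v) = 0 = \mu(v)$, and the sphere $u \cup \bar v$ in $\C\P^n$ represents $k$ times the line class for some $k \in \Z$, giving $\omega(u) = k$ and $\mu(u) = 2(n+1)k$. If the image is non-trivial, the same argument applied to $2u$ yields $\mu(u) = 2(n+1)\omega(u)$, while comparison with the standard Maslov $n+1$ disc in $(\C\P^n, \R\P^n)$---matched to $\partial u$ via a cylinder in $U$---shows $\mu(u) \in (n+1) + 2(n+1)\Z$. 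Thus $\tilde L$ is monotone, and Seidel's bound $N_{\tilde L} \leq n+1$ forces the non-trivial case to be realised by some disc, so $N_{\tilde L} = n+1$. Applying \cref{corollary homotopy equivalence} gives $\tilde L \simeq \R\P^n$, hence $L \simeq \R\P^n$.

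The main technical obstacle is the bookkeeping that identifies symplectic areas and Maslov indices of discs in $U$ computed intrinsically in $T^*\R\P^n$ with those computed in $\C\P^n$, which requires the Weinstein model to match the relevant complex structures on $T(T^*\R\P^n)|_U$ and $T(\C\P^n)|_U$ closely enough that the intrinsic exactness and Maslov-triviality of $L$ translate directly into the vanishing of $\omega$ and $\mu$ on $v$. The low-dimensional case $n=1$ (where $\R\P^1 = S^1$ and $\pi_1$ is not torsion) is trivial and may be dispatched separately.
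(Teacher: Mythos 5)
Your reduction is the same as the paper's: embed $L$ into $\C\P^n$ via a Weinstein neighbourhood of $\R\P^n$ (rescaling by the Liouville flow first), check monotonicity and $N=n+1$, and quote \cref{corollary homotopy equivalence}. Where you differ is in how monotonicity and the Maslov number are verified. The paper does this cohomologically: both $[\omega]$ and $\mu_L$ restrict to zero on $H_2(U,L)$ by exactness and Maslov-triviality, so both factor through $H_2(\C\P^n,U)\cong\Z$ (generated by intersection with the quadric at infinity), whence they are proportional and $\mu_L$ takes values in $(n+1)\Z$ since a line has Maslov index $2(n+1)$ and meets the quadric twice. You instead cap and glue explicit discs, splitting on the image of $\partial u$ in $\pi_1(\R\P^n)$; this is a legitimate, more hands-on version of the same computation. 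One step of yours is not rigorous as written: ``matching $\partial u$ to the standard half-line disc via a cylinder in $U$'' does not directly give $\mu(u)\in(n+1)+2(n+1)\Z$, because a cylinder with one boundary on $L$ and one on $\R\P^n$ has no canonically controlled Maslov contribution (it depends on how $TL$ and $T\R\P^n$ wind relative to each other along the two ends). Fortunately this step is superfluous: your doubling trick already gives $2\mu(u)=\mu(2u)\in 2(n+1)\Z$, hence $\mu(u)\in(n+1)\Z$ for \emph{every} disc, so $(n+1)\mid N_{\tilde L}\mid 2(n+1)$, and Seidel's bound $N_{\tilde L}\le n+1$ forces $N_{\tilde L}=n+1$. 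Note that the paper's own proof also tacitly uses Seidel's bound (or an equivalent fact) to rule out $N_{\tilde L}=2(n+1)$, so your explicit appeal to it is not an extra cost. With the cylinder step deleted and the conclusion drawn from the doubling argument instead, your proof is complete.
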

\begin{proof}
Let $L$ be such a Lagrangian.  Note that $\C\P^n$ decomposes as the union of a quadric $Q$ and a disjoint Weinstein neighbourhood $U$ of the standard $\R\P^n$, so by rescaling $L$ towards the zero section if necessary we may assume it embeds in $U$ and hence in $\C\P^n$.  By considering the long exact sequence in cohomology for the triple $(\C\P^n, U, L)$, with real coefficients, the exactness and vanishing of the Maslov class of $L$ in $U$ imply that $L$ is monotone in $\C\P^n$.  (See \cref{section soft concepts} for a summary of the Maslov class and monotonicity.)

More specifically, $H^2(\C\P^n, U; \Z)$ is freely generated by the class $\alpha$ given by ``intersection with $Q$'', and there exist positive constants $A$ and $M$ such that for every disc class $\beta$ in $\pi_2(\C\P^n, L)$ the area and Maslov index of $\beta$ are given by $A\alpha(\beta)$ and $M\alpha(\beta)$ respectively.  Since the Maslov index of a line is $2(n+1)$ (twice its Chern number), we see that $M$ is in fact $n+1$.  Hence $L$ has minimal Maslov number $n+1$ and \cref{corollary homotopy equivalence} gives the result.
\end{proof}

This result was already known from the work of Abouzaid \cite{AbouzaidNearbyLagrangians}, building on Fukaya--Seidel--Smith \cite{FukayaSeidelSmith} and Nadler \cite{NadlerMicrolocalBranes}, but our approach is much more elementary.  Kragh later removed the Maslov-zero hypothesis with Abouzaid \cite{KraghRingSpectra}, and subsequently gave a simpler proof \cite{KraghHomotopyEquivalenceSerre} of a weaker statement which also implies \cref{corollary nearby Lagrangian}, but using completely different methods.

\subsection{Relation to previous works}
\label{subsection previous results}

Monotone Lagrangians $L$ in $\C\P^n$, and especially those which resemble $\R\P^n$, have been intensively studied.  Back in \cite[Theorem 3.1]{SeidelGraded} Seidel showed that for any monotone $L \subset \C\P^n$ the group $H^1(L; \Z/(2n+2))$ is non-zero (this is roughly equivalent to the fact that the minimal Maslov number of $L$ is at most $n+1$, since the mod-$(2n+2)$ reduction of the Maslov class in $H^2(X, L; \Z/(2n+2))$ lifts to $H^1(L; \Z/(2n+2))$ and it is this lift which is shown to be non-zero), and that if it's $2$-torsion then there is an isomorphism of graded $\Z/2$-vector spaces $H^*(L; \Z/2) \cong H^*(\R\P^n; \Z/2)$.  He did this by showing that the Floer cohomology of $L$ is $2$-periodic in its grading, and comparing it with the classical cohomology of $L$ via the Oh spectral sequence \cite{OhSpectralSequence} (also constructed by Biran--Cornea \cite{BiranCorneaQS}, and recapped in \cref{CanonicalComplex} below).  In particular, if $L \subset \C\P^n$ is a Lagrangian satisfying $2H_1(L; \Z)=0$ (which automatically implies that it's monotone and that its minimal Maslov number is $n+1$) then $L$ is additively a $\Z/2$-homology $\R\P^n$.

Later, Biran--Cieliebak \cite[Theorem B]{BiranCieliebak} reproved the first part of Seidel's result by introducing the important \emph{Biran circle bundle} construction, which associates to a monotone Lagrangian in $\C\P^n$ a displaceable one in $\C^{n + 1}$ and then uses the vanishing of the Floer cohomology of the latter to constrain the topology of the former via the Gysin sequence.  Combining this construction with the Oh spectral sequence, Biran \cite[Theorem A]{BiranNonIntersections} then reproved the second part of Seidel's result---the $\Z/2$-homology isomorphism---but under the hypothesis that $L \subset \C\P^n$ is monotone and of minimal Maslov number $n+1$ (he states the assumption that $H_1(L; \Z)$ is $2$-torsion but only uses the monotonicity and minimal Maslov consequences). Note that, in conjunction with the classification of surfaces, this result already shows that for $n = 2$ the Lagrangian must be diffeomorphic to $\R\P^2$.

The next major development was the introduction of the pearl complex model for Floer cohomology by Biran--Cornea \cite{BiranCorneaQS}, using which they gave another proof of the additive isomorphism $H^*(L;\Z/2) \cong H^*(\R\P^n; \Z/2)$ and showed that it is in fact an \emph{algebra} isomorphism if $H_1(L;\Z)$ is $2$-torsion \cite[Section 6.1]{BiranCorneaRigidityUniruling} (this was partially proved in \cite{BiranNonIntersections}; the $2$-torsion assumption is only used for odd $n$).  The key ingredient is the quantum module action of the hyperplane class $h$ in $QH^*(\C\P^n;\Z/2)$ on $HF^*(L,L;\Z/2)$: since $h$ is invertible, this gives an isomorphism
\[
h \mathbin{*} - : HF^*(L, L; \Z/2) \xrightarrow{\ \sim \ } HF^{*+2}(L, L; \Z/2)
\]
which subsumes both Seidel's periodicity observation and the circle bundle Gysin sequence map given by cupping with the Euler class.

The strongest results to date were then obtained by Damian \cite[Theorem 1.8 c)]{Damian}, who applied his lifted Floer theory to the circle bundle construction to show that when $n$ is odd and $2H_1(L;\Z) = 0$, $L$ must be homeomorphic to a double quotient of $S^n$.  In fact, under our weaker hypothesis---monotonicity and minimal Maslov number $n+1$---Damian's methods can be pushed to give:
\begin{align}
&\text{For odd $n$, the universal cover $\widetilde{L}$ is homeomorphic to $S^n$ and $\pi_1(L)$ is finite.}\label{eqDamianOdd}
\\ &\text{For even $n$, $\widetilde{L}$ is a $\Z/2$-homology sphere and $\pi_1(L) \cong \Z/2$.}\label{eqDamianEven}
\end{align}
We sketch these arguments in \cref{subsection Damian}, and thank the referee for pointing them out to us.

Our \cref{Theorem1} strengthens these results by showing that, regardless of the parity of $n$, $\widetilde{L}$ is homeomorphic to $S^n$ and $\pi_1(L) \cong \Z/2$.  One notable feature of Damian's approach is its reliance on the ingenious auxiliary construction of the circle bundle, which replaces our Lagrangian $L$ in $\C\P^n$ with the related Lagrangian $\Gamma_L$ in $\C^{n+1}$ that is necessarily displaceable.  Part of our motivation was to see whether one could prove the same results by directly studying the Floer theory of $L$, and this paper answers that question in the affirmative.

%----------------------------------------------------

\subsection{Idea of proof}

The proof of \cref{Theorem1} is a combination of the quantum module action and lifted Floer theory, which we discuss within the more general framework of higher rank local systems.  For many of the arguments we need to use Floer theory with $\Z$ coefficients, rather than $\Z/2$, which is problematic when $n$ is even because then $L$ is necessarily non-orientable, and hence so are the moduli spaces of holomorphic discs that we wish to count. However, we are able to work around this using the recently-introduced canonical orientations package of Zapolsky \cite{Zapolsky}, of which the present paper represents one of the first concrete applications.

Ignoring many technicalities, the idea (for $n \geq 3$) is roughly as follows.  Since $N_L \geq 4 > 2$, the Floer cohomology of $L$ can be defined with arbitrary higher rank local systems; in particular, lifted Floer theory is defined for all covers $L'$ of $L$.  For such covers, the Oh spectral sequence computing $HF^*(L, L'; R)$ contains the compactly-supported singular cohomology $H^*_c(L'; R)$ in the zeroth column of its first page, as shown in \cref{figLprime}.
\begin{figure}[ht]
\begin{tikzpicture}
\begin{scope}[xshift=-3cm]
\matrix(m)[matrix of math nodes, nodes={inner sep=0, outer sep=0, minimum height=3.2ex, text depth=0ex, minimum width=0ex}, column sep=4.5ex]
{
 &  & \ssvdots \\
 & H^n_c(L'; R) & \raisebox{1.5pt}{$\bullet$} \\
 & \ssvdots & \\
 & H^2_c(L'; R) &  \\
 & H^1_c(L'; R) &  \\
\raisebox{1.5pt}{$\bullet$} & H^0_c(L'; R) &  \\
\ssvdots &  &  \\
};
\begin{scope}[every path/.style={shorten <= 0.15cm, shorten >= 0.15cm}]
\draw[<-] (m-6-2.west)-- +(-0.75cm, 0);
\draw[->] (m-2-2.east)-- +(0.75cm, 0);
\end{scope}
\end{scope}
\end{tikzpicture}
\caption{The zeroth column of the first page of the Oh spectral sequence computing $HF^*(L, L'; R)$.\label{figLprime}}
\end{figure}
As $N_L= n+1$ all of the cohomology groups $H^*_c(L'; R)$ with $0 < * < n$ survive to the limit.  We are being deliberately vague about the choice of ring $R$ and what the terms labelled $\bullet$ are in the spectral sequence.

There is an algebra homomorphism $\CO : QH^*(\C\P^n; R) \rightarrow HF^*(L, L;R)$ called the length-zero closed--open string map \cite[Definition 2.3]{SheridanFano}, which is described in Zapolsky's framework \cite[Section 3.9.3]{Zapolsky} as the quantum module action of $QH^*$ on the unit $1_L$ in $HF^0$.  When $R=\Z/2$ we show that it is an isomorphism in degree $2$.  Since $N_L \geq 4$, this map in degree $2$ coincides with the classical restriction $i^* : H^2(\C\P^n; R) \rightarrow H^2(L; R)$, so we deduce that the latter is also an isomorphism and hence that $L$ is relatively pin.  Using Zapolsky's machinery, this allows us to take $R = \Z$.

The Auroux--Kontsevich--Seidel criterion (\cite[Proposition 6.8]{AurouxMSandTduality}, \cite[Lemma 2.7]{SheridanFano}) now tells us that $\CO(2(n+1)h) = 0$, so $H^1(L; \Z)$ (which coincides with $HF^1(L, L; \Z)$) is $2(n+1)$-torsion and therefore vanishes ($H^1$ is always torsion-free).  A topological argument then shows that $i^*h$ has order $2$ in $H^2(L; \Z)$, so $\CO(2h)=0$.  Hence, by the quantum module action of $h$, all intermediate compactly-supported cohomology groups of $L'$ are $2$-torsion and $2$-periodic. Letting $L'$ range through the covers of $L$ corresponding to cyclic subgroups of $\pi_1(L)$ yields the result.

\subsection{Structure of the paper}

In \cref{section soft concepts} we review the Maslov index and  monotonicity.  \Cref{section Floer review} then gives a summary of Zapolsky's canonical pearl complex, including: its algebraic structures (\cref{subsection algebraic structures}); their relation to classical operations (\cref{classical comparison}); orientations and the relevance of relative pin structures (\cref{RelPin}); local systems (\cref{TwistedCoeffs}); the worked example of $\R\P^n$ (\cref{RPnExample}), which we compute in a different way from Zapolsky; and an outline of Damian's methods (\cref{subsection Damian}).  Finally, \cref{section proof} contains the full proof of \cref{Theorem1}.

\subsection{Acknowledgements}

We would like to thank Jonny Evans for helpful feedback, an anonymous referee for valuable suggestions and comments, and University College London where both of us were based whilst writing this paper.
MK was supported by EPSRC grant [EP/L015234/1], 
the EPSRC Centre for Doctoral Training in Geometry and Number Theory (The London School of Geometry and Number Theory), University College London.
JS was supported by EPSRC grant [EP/P02095X/1].

%====================================================================
\section{Soft concepts}\label{section soft concepts}
We begin by recalling some general facts about the Maslov class, the minimal Maslov number and monotonicity. For completeness we state the definitions and observations in their most general form, but for the purposes of the rest of this paper we will only use the special case \cref{lemma: the restriction of the hyperplane class to a lag in cpn}, so the reader familiar with the concepts is invited to skip the interlude. In this section all homology and cohomology groups are considered with $\Z$ coefficients, unless explicitly specified otherwise. 

Let $(X, J)$ be an almost complex manifold of real dimension $2n$ and $L \subset X$ a properly embedded totally real submanifold of dimension $n$. The bundle $\Lambda^n_{\R}TL$ is naturally a rank $1$ real subbundle of $\at{\Lambda^n_{\C}TX}{L}$, so the bundle pair $(\Lambda^n_\C TX, \Lambda^n_\R TL)$ over $(X, L)$ is classified by a map
\[
\phi : (X, L) \rightarrow (B\mathrm{U}(1), B(\Z/2)).
\]
One can view the pair $(B\mathrm{U}(1), B(\Z/2))$ %either abstractly as the projection
%\[
%B(\Z/2) \simeq E(\Z/2) \times_{\Z/2} E\mathrm{U}(1) \rightarrow E\mathrm{U}(1)/\mathrm{U}(1) \simeq B\mathrm{U}(1),
%\]
%where $\Z/2$ acts on $E\mathrm{U}(1)$ via the subgroup $\{\pm 1\}$, or more concretely
as
\[
B(\Z/2) \cong \R\P^\infty = \mathrm{Gr}_\R(1, \R^\infty) \xrightarrow{\ \otimes \C \ } \mathrm{Gr}_\C(1, \C^\infty) = \C\P^\infty \cong B\mathrm{U}(1).
\]
The long exact sequence for the pair shows that $H^2(B\mathrm{U}(1), B(\Z/2))$ is isomorphic to $\Z$, generated by a relative characteristic class which maps to $2c_1$ in $H^2(B\mathrm{U}(1))$.  This generator is called the \emph{Maslov class}, denoted by $\mu$, and its pullback via $\phi$ is the Maslov class of $L$, denoted by $\mu_L \in H^2(X, L)$.  If $j^* \colon H^2(X, L) \to H^2(X)$ is the natural restriction map, then it is clear from the above description that one has
\begin{equation}\label{eq Viterbo}
j^*(\mu_L) = 2c_1(X).
\end{equation}
We will write $I_{\mu_L} \colon H_2(X, L) \to \Z$ and $I_{c_1}\colon H_2(X) \to \Z$ for the group homomorphisms given by pairing with $\mu_L$ and $c_1$ respectively. 

\begin{rmk}\label{remarkMuvsW1}
Note that the long exact sequence of the pair $(B\mathrm{U}(1), B(\Z/2))$ with $\Z/2$-coefficients shows that the mod $2$ reduction of $\mu_L$ equals the image of the first Stiefel--Whitney class of $TL$ under the co-boundary map $H^1(L;\Z/2) \to H^2(X,L;\Z/2)$. In particular, for any class $A \in H_2(X, L)$, the parity of $I_{\mu_L}(A)$ is determined by whether the pairing of $w_1(TL)$ with $\partial A$ vanishes. Thus, if $L$ is orientable then $I_{\mu_L}$ has image in $2\Z$ and, conversely, if $I_{\mu_L}(H_2(X, L)) \le 2\Z$ and the boundary map $H_2(X, L) \to H_1(L)$ is surjective (e.g.~if $H_1(X) = 0$), then $L$ is orientable.
\end{rmk}

Now let $H_2^D(X, L)$ and $H_2^S(X)$ denote the images of the Hurewicz homomorphisms
\[
\pi_2(X, L) \to H_2(X, L) \text{\quad and \quad} \pi_2(X) \to H_2(X)
\]
and let $j \colon H_2(X) \to H_2(X, L)$ be the natural map. Define the integers $N_L^{\pi}$, $N_L^H$, $N_X^{\pi}$ and $N_X^H$ to be the non-negative generators of the $\Z$-subgroups $I_{\mu_L}(H_2^D(X, L))$, $I_{\mu_L}(H_2(X, L))$, $I_{c_1}(H_2^S(X))$, $I_{c_1}(H_2(X))$, respectively. Using \eqref{eq Viterbo} and the fact that $j(H_2^S(X)) \le H_2^D(X, L)$, it is easy to see that there exist non-negative integers $k_L, k_X, m_{\pi}, m_H$ such that:
\begin{equation}
\label{equation minimal numbers}
N_L^{\pi} = k_L N_L^H, \quad N_X^{\pi} = k_X N_X^H, \quad 2N_X^{\pi} = m_{\pi}N_L^{\pi}, \quad 
2N_X^H = m_H N_L^H.
\end{equation}
Observe that if $N_L^H \neq 0$ (e.g. if $N_X^H \neq 0$), then one has the identity 
\begin{equation}\label{eq relation between kl, km, mpi and mh}
k_L m_{\pi} = k_X m_H.
\end{equation}

We note the following result for later:

\begin{lem}\label{lemma: the main soft observation lemma}
Suppose that $N_L^H \neq 0$, $H^1(L) = 0$, and $H^2(X)$ is isomorphic to $\Z$, generated by some class $h$. Then the restriction of $h$ to $H^2(L)$ has order $m_H$.
\end{lem}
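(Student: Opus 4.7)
The plan is to compute the order of $i^*h$ from the cohomology long exact sequence of the pair $(X,L)$. With $H^1(L)=0$, the segment
\[
0 \to H^2(X,L) \xrightarrow{j^*} H^2(X) \xrightarrow{i^*} H^2(L)
\]
is exact, so $H^2(X,L)$ embeds into $H^2(X)=\Z h$ and is therefore either $0$ or $\Z$. The first option is excluded, because $N_L^H\neq 0$ forces $\mu_L\neq 0$, leaving $H^2(X,L)\cong\Z$.

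Being free, $H^2(X,L)$ is identified with $\Hom(H_2(X,L),\Z)$ by the universal coefficient theorem, so any generator $\mu'$ is a surjective homomorphism. Matching images under evaluation on $H_2(X,L)$, one can then write $\mu_L = N_L^H\mu'$ in $H^2(X,L)$. Likewise, UCT applied to $X$ (using that $H^2(X)$ is free) identifies $h$ with a surjection $H_2(X)\twoheadrightarrow\Z$ and gives $c_1(X) = \pm N_X^H h$. Substituting these into \eqref{eq Viterbo} and using $2N_X^H = m_H N_L^H$ produces $j^*\mu' = \pm m_H h$.

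Hence $j^* H^2(X,L) = \Z\cdot j^*\mu' = m_H\Z\cdot h$, and by exactness $\ker i^* = m_H\Z\cdot h$, so $i^*h$ has order exactly $m_H$ in $H^2(L)$.

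The step to watch is promoting the pairing statement $\mu_L(H_2(X,L)) = N_L^H\Z$ to the equation $\mu_L = N_L^H\mu'$ between cohomology classes. This is where the assumption $H^1(L)=0$ really pays off: it forces $H^2(X,L)$ to inject into a free group, hence to be free, which kills the $\operatorname{Ext}(H_1(X,L),\Z)$ piece in UCT and lets the division by $N_L^H$ lift from homomorphisms to genuine cohomology classes.
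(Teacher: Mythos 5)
Your argument is correct and follows essentially the same route as the paper: both use the exactness of $0 \to H^2(X,L) \xrightarrow{j^*} \Z\langle h\rangle \xrightarrow{i^*} H^2(L)$ together with the universal coefficient theorem to write $\mu_L = N_L^H g$ and $c_1 = N_X^H h$ for a generator $g$ of $H^2(X,L)$, then apply $j^*$ and \eqref{eq Viterbo} to get $j^*g = m_H h$ and read off the order of $i^*h$ from exactness. Your closing remark correctly identifies the role of $H^1(L)=0$ (it makes $H^2(X,L)$ embed in $\Z$, so the finite group $\operatorname{Ext}(H_1(X,L),\Z)$ must vanish and evaluation on $H_2(X,L)$ is faithful), which is exactly the point the paper's phrase ``by the universal coefficients theorem there exists $u$ pairing with $g$ to $1$'' is leaning on.
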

\begin{proof}
%Since $H_1(L)$ is finite, the universal coefficients theorem tells us that $H^1(L) = 0$. Then 
The long exact sequence in cohomology for the pair $(X, L)$ yields the exact sequence
\[
\xymatrix{
H^1(L) \ar[r]\ar@{=}[d]& H^2(X, L) \ar[r]\ar@{=}[d]& H^2(X) \ar[r]^{i^*}\ar@{=}[d]& H^2(L)\ar@{=}[d]\\
0 \ar[r]& H^2(X, L) \ar[r]^{j^*}& \Z\langle h \rangle \ar[r]^{i^*}& H^2(L),}
\]
where $i : L \rightarrow X$ is the inclusion.  This tells us that $H^2(X, L)$ injects into $\Z\langle h \rangle$ and so is freely generated by some class $g \in H^2(X, L)$, which is non-zero since $N_L^H \neq 0$.  By the universal coefficients theorem there exists a class $u \in H_2(X, L)$ with which $g$ pairs to $1$, and hence $\mu_L = N_L^H g$.  The same argument shows that $c_1 = N_X^H h$. Applying $j^*$ to the identity $\mu_L = N_L^H g$ and using \eqref{eq Viterbo} we then get $2 N_X^H h = N_L^H j^*(g)$ and hence $j^*(g) = m_H \,h$. By exactness of the above diagram it follows that $i^*(h)$ has order $m_H$ in $H^2(L)$.
\end{proof}

Consider now the case when $(X, \omega)$ is symplectic and $L$ is a Lagrangian submanifold. Then $L$ is totally real with respect to any almost complex structure compatible with the symplectic form. 
 In this setting we also have homomorphisms $I_{\omega} \colon H_2(X) \to \R$,  $I_{\omega, L} \colon H_2(X, L) \to \R$ given by integration of the symplectic form. The manifold $(X, \omega)$ is called \emph{monotone} if there exists a positive constant $\lambda$ such that 
\[\at{I_{\omega}}{H_2^S(X)} = 2 \lambda \at{I_{c_1}}{H_2^S(X)}.\]
For example, $(\C\P^n, \omega_{FS})$ is monotone with $\lambda = \pi/2(n + 1)$ when the Fubini-Study form is normalised so that a line has area $\pi$. 
In turn, the Lagrangian submanifold $L$ is called \emph{monotone} if
\[\at{I_{\omega, L}}{H_2^D(X, L)} = \lambda' \at{I_{\mu_L}}{H_2^D(X, L)}\]
for some positive constant $\lambda'$.  Note that if $\at{I_{c_1}}{H_2^S(X)} \neq 0$ then \eqref{eq Viterbo} implies that a monotone Lagrangian can only exist if $X$ itself is monotone and $\lambda'$ coincides with $\lambda$. 

In the literature on holomorphic curves, the numbers $N_X^{\pi}$ and $N_L^{\pi}$ are usually the ones referred to as the \emph{minimal Chern number} of $X$ and the \emph{minimal Maslov number} of $L$, respectively. This can potentially cause confusion since these numbers are not the same as $N_X^H$ and $N_L^H$ in general. However, if $X$ is simply connected (for example, if it is a projective Fano variety---see \cite{KollarMiyaokaMori} and \cite[Theorem 3.5]{Campana}), then these numbers coincide. Indeed, we have the commutative diagram
\[
\xymatrix{
\pi_2(X) \ar[d] \ar[r]& \pi_2(X, L) \ar[d]\ar[r]& \pi_1(L) \ar[d]\ar[r]& 1 \ar[d]\\
H_2(X) \ar[r] & H_2(X, L) \ar[r]& H_1(L) \ar[r]& 0
}
\]
in which the third vertical arrow is a surjection by Hurewicz.  If $X$ is simply connected then the first vertical arrow is also a surjection, again by Hurewicz, so $N_X^\pi = N_X^H$. A diagram chase in the spirit of the $5$-lemma (or alternatively, noticing that $\pi_1(X,L) = 0$ and applying the relative Hurewicz theorem) then shows that the second vertical arrow must also be a surjection, from which we deduce that $N_L^{\pi} = N_L^H$.  In this case there is therefore no ambiguity, and we denote the common values simply by $N_X$ and $N_L$ respectively.

Consider again the example of $X = \C\P^n$, with $L \subset \C\P^n$ a totally real submanifold. We then have $N_{\C\P^n} = n + 1$ and by \eqref{equation minimal numbers} $N_L$ is non-zero and divides $2(n + 1)$. As a corollary of \cref{lemma: the main soft observation lemma} we immediately obtain:
\begin{lem}\label{lemma: the restriction of the hyperplane class to a lag in cpn}
If $L \subset \C\P^n$ is a totally real submanifold with $H^1(L) = 0$ and minimal Maslov number $N_L$
% and let $u \in H_2(\C\P^n, L)$ be a class with $I_{\mu_L}(u) = N_L$. If $\partial u$ has finite order in $H_1(L)$ then that order is divisible by $\frac{2(n + 1)}{N_L}$ and so is the size of the torsion subgroup of $H_1(L)$. If further $H_1(L)$ is finite, 
then the restriction of the hyperplane class $h \in H^2(\C\P^n)$ to $L$ has order $2(n + 1)/N_L$ in $H^2(L)$.
\end{lem}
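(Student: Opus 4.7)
The plan is to deduce this corollary directly from \cref{lemma: the main soft observation lemma} applied to the pair $(X, L) = (\C\P^n, L)$. First I would verify the three hypotheses of that lemma. The assumption $H^1(L) = 0$ is given, and $H^2(\C\P^n) \cong \Z$ is generated by the hyperplane class $h$, so it remains only to check that $N_L^H \neq 0$. This follows from the hypothesis that $L$ has a well-defined minimal Maslov number $N_L$ (tacitly assumed non-zero) together with the observation that $\C\P^n$ is simply connected, so by the discussion preceding the lemma $N_L^H = N_L^\pi = N_L \neq 0$.

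Next I would identify the integer $m_H$ appearing in \cref{lemma: the main soft observation lemma}, which by definition satisfies $2 N_X^H = m_H N_L^H$. Using simple connectedness of $\C\P^n$ once more, $N_X^H = N_{\C\P^n}^\pi = N_{\C\P^n} = n+1$ (the standard fact that a line has Chern number $n+1$). Combined with $N_L^H = N_L$, solving gives $m_H = 2(n+1)/N_L$, which is a positive integer since $N_L$ divides $2(n+1)$ by \eqref{equation minimal numbers}.

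Applying \cref{lemma: the main soft observation lemma} then immediately gives that $i^*(h) \in H^2(L)$ has order exactly $m_H = 2(n+1)/N_L$, which is the claim. The argument is essentially bookkeeping: the nontrivial input is the lemma already proved, and the only thing to check is the translation between the $\pi$- and $H$-versions of the minimal numbers, which is free since the ambient space is simply connected. I do not foresee any obstacle.
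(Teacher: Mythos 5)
Your proposal is correct and matches the paper's own argument: the lemma is stated there precisely as an immediate corollary of \cref{lemma: the main soft observation lemma}, using $N_{\C\P^n}=n+1$, the identification $N_L^H=N_L^\pi=N_L$ via simple connectedness, and $m_H=2(n+1)/N_L$. The only cosmetic difference is that the paper deduces $N_L\neq 0$ from \eqref{equation minimal numbers} (since $N_X^H=n+1\neq 0$) rather than assuming it tacitly, but this changes nothing.
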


%====================================================================
\section{Floer theory review}
\label{section Floer review}

\subsection{The canonical pearl complex}
\label{CanonicalComplex}

Our argument for \cref{Theorem1} is based on consideration of the self-Floer theory of $L$.  In particular, we employ Zapolsky's \emph{canonical pearl complex} which we now review briefly; see \cite{Zapolsky} for further details.  Let $(X, \omega)$ be a symplectic manifold which is either closed or convex at infinity, and let $L \subset X$ be a closed, connected, monotone Lagrangian submanifold of minimal Maslov number $N_L^{\pi}$ at least $2$. We consider the following condition \cite[Section 1.2]{Zapolsky}: 
\begin{center}
\begin{tabular}{m{0.95\textwidth}}
\emph{Assumption (O):} For some point (or, equivalently, all points) $q$ in $L$, the second Stiefel--Whitney class $w_2(TL)$ vanishes on the image of $\pi_3(X, L, q)$ in $\pi_2(L, q)$ under the boundary map.
\end{tabular}
\end{center}
As explained in \cref{RelPin} below, this is implied by $L$ being relatively pin. Fix a ground ring $R$.  This must have characteristic $2$ unless $L$ satisfies assumption (O), in which case it is arbitrary.

Fix a generic choice of $\omega$-compatible almost complex structure $J$ on $X$. For each point $q$ in $L$ and each class $A$ in $\pi_2(X, L, q)$, Zapolsky considers the family $D_A$ of linear Cauchy--Riemann operators over the space of based discs in class $A$.  We restrict these operators to vector fields vanishing at the base point and denote the resulting family by $D_A \# 0$.  By assumption (O), the index bundle of $D_A \# 0$ is orientable (see \cref{RelPin} below) and we define $\mathscr{C}(q, A)$ to be the free $R$-module of rank $1$ generated by its two orientations, modulo the relation that they sum to zero.  Taking the direct sum of these modules over $A$ we obtain a module $\mathscr{C}^*_q$, graded by the Maslov index: 
\[\mathscr{C}^r_q \coloneqq \bigoplus_{\substack{A \in \pi_2(X, L, q)\\ I_{\mu_L}(A) = r}} \mathscr{C}(q, A).\] As $q$ varies these modules fit together to form a local system over $L$ which we denote by $\mathscr{C}^*$.  Note that $\mathscr{C}^0$ contains a copy of the trivial local system given by $\mathscr{C}(q, 0)$ inside each fibre $\mathscr{C}^0_q$; we denote this by $\mathscr{C}^\mathrm{triv}$.

%Now choose a Morse function $f$ on $L$ and a metric $g$ such that the pair $(f, g)$ is Morse--Smale. For each critical point $q \in \mathrm{Crit}(f)$ and $A \in \pi_2(X, L, q)$ consider again the family of operators $D_A$ but this time restrict them to vector fields which at $1$ are tangent to the descending manifold of $f$ at $q$. By assumption (O) the index bundle of this family is again orientable (its orientation differs from a chosen orientation of $\mathrm{det}(D_A\# 0)$ by a choice of orientation of the descending manifold of $q$) and let $C(q, A)$ denote the corresponding free $R$-module.  Zapolsky's complex $CF^{*, *}_\mathrm{Zap} (L, L; R)$ is given by
%\[
%CF^{r, s}_\mathrm{Zap} (L, L; R) = \bigoplus_{\substack{q \in \mathrm{Crit}(f) \\ |q| = s}}
%\bigoplus{\substack{A \in \pi_2(X, L, q) \\ I_{\mu_L}(A) = r}} C(q, A),
%\]
%where $|q|$ denotes the Morse index of the critical point $q$.  The differential, of total degree $1$, counts \emph{pearly trajectories}, meaning upwards Morse flow lines which may be interrupted by the boundaries of $J$-holomorphic discs in $X$ bounded by $L$.  More precisely, for each such trajectory $u$ from $q$ to $q'$ and each class $A$ in $\pi_2(X, L, q)$ Zapolsky defines a class $A \# u$ in $\pi_2(X, L, q')$ and an isomorphism
Now choose a Morse function $f$ on $L$ and a metric $g$ such that the pair $(f, g)$ is Morse--Smale. For each critical point $q \in \mathrm{Crit}(f)$ let $C(q)$ denote the rank $1$ free $R$-module generated by the orientations of the descending manifold of $q$ (modulo summing to zero, as usual; we won't keep repeating this). Zapolsky's complex $CF^{*, *}_\mathrm{Zap} (L, L; R)$ is given by \cite[Section 4.2.1]{Zapolsky}
\[
CF^{r, s}_\mathrm{Zap} (L, L; R) = \bigoplus_{\substack{q \in \mathrm{Crit}(f) \\ |q| = s}}\mathscr{C}^r_q \otimes_R C(q),
\]
where $|q|$ denotes the Morse index of the critical point $q$.  The differential, of total degree $1$, counts rigid \emph{pearly trajectories}, meaning upwards Morse flow lines which may be interrupted by the boundaries of $J$-holomorphic discs in $X$ bounded by $L$.  More precisely, for each critical point $q \in \mathrm{Crit}(f)$ and each class $A \in \pi_2(X, L, q)$ one considers the module $C(q, A) \coloneqq \mathscr{C}(q, A) \otimes_R C(q)$. For each pearly trajectory $u$ from $q$ to $q'$ Zapolsky defines a class $A \# u$ in $\pi_2(X, L, q')$ and an isomorphism
\[
C(u, A) : C(q, A) \rightarrow C(q', A \# u).
\]
The differential $\partial$ is then the sum of all these maps $C(u, A)$.  See \cite[Section 4.2.2]{Zapolsky} for full details.  We denote the resulting cohomology by $HF^*_{\mathrm{Zap}}(L, L; R)$.

Crucially, $\partial$ has non-negative degree with respect to the $r$-grading: in fact it decomposes as $\partial_0 + \partial_1 + \dots$ where $\partial_j$ has bigrading $(jN_L^{\pi}, 1-jN_L^{\pi})$.  Filtering by this grading we therefore obtain

\begin{prop}[{\cite[Theorem 4.17]{Zapolsky}}]
There is a spectral sequence---the Oh (or Biran) spectral sequence---which starts at the (Morse) cohomology of $L$ with coefficients in the local system $\mathscr{C}^*$, and which converges to $HF^*_{\mathrm{Zap}}(L, L; R)$.
\end{prop}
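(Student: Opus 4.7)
The plan is to obtain the spectral sequence as the one associated with the standard decreasing filtration by the first (``quantum'') grading $r$. Concretely, I would set
\[
F^p CF^n_{\mathrm{Zap}}(L, L; R) := \bigoplus_{\substack{r + s = n \\ r \ge p}} CF^{r, s}_{\mathrm{Zap}}(L, L; R).
\]
Since $\partial = \partial_0 + \partial_1 + \cdots$ with $\partial_j$ of bigrading $(jN_L^{\pi}, 1 - jN_L^{\pi})$, each summand increases $r$ by a non-negative amount and so preserves $F^\bullet$, which is therefore a filtration by subcomplexes. Moreover, for any fixed total degree $n$ the Morse index $s$ lies in $\{0, 1, \dots, \dim L\}$, forcing $n - \dim L \le r \le n$; hence $F^p CF^n_{\mathrm{Zap}}(L, L; R) = CF^n_{\mathrm{Zap}}(L, L; R)$ for $p \le n - \dim L$ and vanishes for $p > n$. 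The filtration is thus bounded in every total degree, and the standard machinery for filtered cochain complexes delivers a convergent spectral sequence with abutment $HF^*_{\mathrm{Zap}}(L, L; R)$.

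Next I would identify the first two pages. Tautologically $E_0^{p, q} = CF^{p, q}_{\mathrm{Zap}}(L, L; R)$ with induced differential equal to $\partial_0$, the piece of $\partial$ that fixes $r$. By construction $\partial_0$ counts only rigid pearly trajectories with no disc bubbles---honest unbroken Morse flow lines---and on each such flow line $u$ from $q$ to $q'$ applies Zapolsky's isomorphism $C(u, 0) : C(q, 0) \to C(q', 0)$ tensored with the analogous isomorphism $\mathscr{C}(q, A) \to \mathscr{C}(q', A \# u)$, where $A \# u$ is the class of $A$ transported along $u$. The first factor is the standard Morse orientation transport and the second is parallel transport in the local system $\mathscr{C}^*$, so $d_0$ is the Morse differential twisted by the restriction of $\mathscr{C}^*$ to the degree-$p$ summand $\mathscr{C}^p$. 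Taking cohomology gives $E_1^{p, q} = H^q(L; \mathscr{C}^p)$, and assembling over $p$ presents the $E_1$ page as the Morse cohomology of $L$ with coefficients in the graded local system $\mathscr{C}^*$, as claimed.

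The only non-trivial step is the identification of $\partial_0$ with the local-system-twisted Morse differential: one has to verify that Zapolsky's orientation isomorphism $C(u, 0)$ attached to a classical (disc-free) flow line genuinely factors as the Morse orientation isomorphism on $C(q)$ tensored with parallel transport in $\mathscr{C}^*$ along $u$. This is implicit in \cite[Section 4.2.2]{Zapolsky} and amounts to unwinding the linearisation of the Cauchy--Riemann operators $D_A \# 0$ along the constant family of discs sitting over a Morse flow line; once this is checked, the rest of the argument is the standard bookkeeping of a bounded filtered complex.
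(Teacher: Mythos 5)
Your proposal is correct and follows exactly the route the paper takes (and attributes to Zapolsky): filter by the quantum grading $r$, use that $\partial_j$ has bigrading $(jN_L^{\pi}, 1-jN_L^{\pi})$ so the filtration is preserved and bounded in each total degree, and identify $\partial_0$ with the Morse differential twisted by $\mathscr{C}^*$. You simply spell out the convergence bookkeeping and the $E_1$ identification that the paper delegates to \cite[Theorem 4.17]{Zapolsky}.
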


\begin{rmk}
The monodromy of $\mathscr{C}^*$ has two contributions: one from the local system $\pi_2(X, L, q) \mapsto q$ over $L$, and one from the index bundles of the operators $D_A \# 0$.  In \cite[Theorem 4.17]{Zapolsky} the former is considered, but in general the latter is necessary too, as in the computation in \cref{RPnExample}.
\end{rmk}

Since the $r$-grading is concentrated in $N_L^{\pi} \Z$, if we laid out the spectral sequence in the standard way then only one in every $N_L^{\pi}$ columns and pages would be interesting.  We therefore squash it up so that the $E_1$-page is given by
\[
E_1^{a, b} = H^{a + b - N_L^{\pi}a}(L; \mathscr{C}^{a})
\]
and the differential on the $E_j$-page is $\partial_j$, acting from the $(a, b)$-entry to the $(a+j, b-j+1)$-entry.  Note that the zeroth column of $E_1$ contains a copy of the usual cohomology of $L$ over $R$, corresponding to $\mathscr{C}^\mathrm{triv} \subset \mathscr{C}^0$.

\subsection{Algebraic structures}
\label{subsection algebraic structures}

The Floer product can be defined on the canonical pearl complex by counting $Y$-shaped pearly trajectories \cite[Section 4.2.3]{Zapolsky}.  As with the differential, one has to keep track of homotopy classes attached to generators of the complex, and define appropriate maps between the modules $C(q, A)$.  The product has a unit $1_L$ coming from the summand $C(\text{Morse min}, 0)$ of $CF^{0,0}_\mathrm{Zap}(L, L; R)$ \cite[Section 4.2.4]{Zapolsky}.

Zapolsky similarly defines a canonical pearl-type complex $QC^{*,*}_\mathrm{Zap}(X; R)$ for the quantum cohomology of $X$ \cite[Section 4.5.1]{Zapolsky}.  This time, he constructs a module $\mathscr{C}(x, B)$ for each point $x$ in $X$ and for each class $B$ in $\pi_2(X, x)$, and since the relevant Cauchy--Riemann operators are all canonically oriented there is no need for an analogue of assumption (O).  These modules assemble into a local system on $X$ (isomorphic to $R[\pi_2(X)]$), and the boundary operator and product are defined in an analogous way to the Lagrangian case above \cite[Sections 4.5.1--4.5.2]{Zapolsky}.  Again there is a canonical trivial subsystem of the local system, given by $\mathscr{C}(x, 0)$ in each fibre.

The closed--open string map (or quantum module structure) carries over to this setting to give

\begin{prop}[{\cite[Section 4.5.4]{Zapolsky}}]
There is a unital $R$-algebra homomorphism
\[
\CO : QH^*_\mathrm{Zap}(X; R) \rightarrow HF^*_\mathrm{Zap}(L, L; R).
\]
\end{prop}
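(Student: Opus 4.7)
The plan is to construct $\CO$ at the chain level as a count of mixed pearly trajectories with one closed string input, and then verify the three required properties: chain map, unitality, and multiplicativity. Concretely, fix Morse--Smale data $(f_X, g_X)$ on $X$ and $(f, g)$ on $L$. For a critical point $x \in \mathrm{Crit}(f_X)$, a class $B \in \pi_2(X, x)$, a critical point $q \in \mathrm{Crit}(f)$, and a class $A \in \pi_2(X, L, q)$, I would consider the moduli space of configurations consisting of a pearly trajectory from some disc $u_0$ to $q$, together with an interior marked point $z \in u_0$ in the domain of the disc constrained to lie on a downward $f_X$-flow line emanating from $x$, such that the total homotopy class (sphere class $B$ attached at $z$ plus disc bubbles in the pearly string) equals $A$ in $\pi_2(X, L, q)$. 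Virtual dimension considerations show that the rigid such configurations are isolated when $|x| + I_{c_1}(B) \cdot 2 = |q| + I_{\mu_L}(A) + \ldots$; the shift is consistent with $2c_1 = j^*\mu_L$ from \eqref{eq Viterbo}, so this is a degree-preserving map once we squash the bigrading.

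Each rigid configuration induces an isomorphism of orientation lines $\mathscr{C}(x,B)\otimes C(x)\to \mathscr{C}(q,A')\otimes C(q)$ where $A' = B\#A_{\text{pearl}}$, built by concatenating Zapolsky's gluing isomorphisms for the closed pearl, the interior insertion, and the open pearl pieces, exactly as $\partial$ and the Floer product are defined in \cite[Sections 4.2.2--4.2.3, 4.5.1--4.5.2]{Zapolsky}. Summing these maps over all rigid configurations defines $\CO$ at the chain level. The chain map identity $\CO \circ \partial_{QC} = \partial_{CF} \circ \CO$ follows from analysing the ends of the corresponding $1$-dimensional moduli space: codimension-one boundary components arise from breaking the Morse flow in $X$, breaking the pearl string in $L$, bubbling a sphere off the distinguished disc at the marked point, and breaking along the flow from $x$ to the insertion point. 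Monotonicity and $N_L^\pi \geq 2$ exclude sphere bubbling in $L$-families off from the boundary pearl, and assumption (O) when char$R \ne 2$ ensures all contributing moduli spaces are coherently orientable.

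For unitality, the unit of $QH^*_\mathrm{Zap}(X;R)$ is the generator at the Morse minimum of $f_X$ with $B=0$; the constraint on the marked point becomes that it lie in the open dense ascending manifold of the minimum, which is a codimension-zero condition. After modding out by automorphisms of the unconstrained disc, the moduli space reduces to the standard pearl moduli computing $1_L$, yielding $\CO(1) = 1_L$. For the algebra homomorphism property, introduce the moduli of configurations with two closed-string inputs $x_1, x_2$, arranged either on the same distinguished disc at two interior marked points (with constrained flow lines from $x_1, x_2$) or distributed along a $Y$-shaped pearl with one insertion on each ``input'' branch. The $1$-dimensional such moduli space has boundary components corresponding, on one hand, to the two insertion points colliding and resolving via the quantum product in $QC^{*,*}_\mathrm{Zap}$ followed by $\CO$, and on the other hand, to degeneration into a $Y$-shaped Floer product configuration in $L$ whose two inputs are the images $\CO(a), \CO(b)$. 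Equating these boundary contributions gives $\CO(a\mathbin{*} b) = \CO(a) \mathbin{*} \CO(b)$.

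The main obstacle I expect is the orientation bookkeeping: one must check that the system of isomorphisms between the lines $\mathscr{C}(x,B)\otimes C(x)$, $\mathscr{C}(q,A)\otimes C(q)$, and their bimodule analogues for the product, are assembled in a way that is compatible with Zapolsky's sign conventions so that the differential-compatibility and the two boundary contributions in the product moduli match with the correct signs. Once these orientation identities are in place---which is essentially the content of \cite[Section 4.5.4]{Zapolsky}---the analytic input (transversality, compactness of the above moduli spaces in the monotone setting, classification of codimension-one degenerations) is standard pearl-complex technology as already used to define $\partial$ and the Floer product.
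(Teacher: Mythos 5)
Your outline is essentially correct, and it reproduces the standard pearl-model construction of the closed--open map; note that the paper itself offers no proof of this proposition but defers entirely to \cite[Section 4.5.4]{Zapolsky}, so the comparison is really with Zapolsky's construction. The one packaging difference worth flagging: as the introduction of the paper indicates (citing \cite[Section 3.9.3]{Zapolsky}), the map is obtained there as the quantum module action of $QH^*_\mathrm{Zap}(X;R)$ on the unit, $\CO(a) = a \mathbin{*} 1_L$, with unitality and multiplicativity extracted from the module axioms and their compatibility with the Floer product, whereas you build $\CO$ directly as a count of pearly trajectories with a closed-string insertion and then prove the ring-map property by a separate one-dimensional cobordism argument. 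Both routes require the same moduli spaces and the same orientation identities in Zapolsky's canonical framework, so neither is a shortcut; the module-action formulation has the advantage that it simultaneously delivers the module structure on $HF^*_{\mathrm{Zap}}(L,L';R)$ used later in \cref{prop Twisted Properties}, which your direct construction would have to supply separately. Two minor points: your degree count is left as a placeholder and should be pinned down using $j^*\mu_L = 2c_1$ as you indicate; and you describe the closed-string constraint as the marked point lying on a downward flow line \emph{emanating from} $x$ but then invoke the \emph{ascending} manifold of the minimum for unitality --- these are opposite conventions, and only the latter (marked point on the ascending manifold of $x$, so that it flows down to $x$) makes the unit insertion a codimension-zero condition as you need.
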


%------------------------------------------------------------------------------------------------------
\subsection{Comparison with classical operations}
\label{classical comparison}

There are obvious inclusions of the Morse complexes of $X$ and $L$ into $QC^{0,*}_\mathrm{Zap}(X; R)$ and $CF^{0,*}_\mathrm{Zap}(L, L; R)$, sending a critical point $q$ (plus an $R$-orientation of its descending manifold) to the corresponding generator of $C(q)$ tensored with the canonical generator of $\mathscr{C}(q, 0)$. For quantum cohomology this gives the usual inclusion
\[
H^*(X; R) \rightarrow QH^*_\mathrm{Zap}(X; R)
\]
of $R$-modules, whilst for Lagrangian Floer cohomology we only obtain a map
\[
H^{<N_L^{\pi}-1}(L; R) \rightarrow HF^*_\mathrm{Zap}(L, L; R),
\]
which can be viewed as the inclusion of $H^{<N_L^{\pi}-1}(L; R)$ in the zeroth column of the first page of the Oh spectral sequence.  We refer to both of these as ``PSS maps'' because, when one uses a Hamiltonian model for the right-hand sides, they coincide with the usual morphisms constructed by Piunikhin-Salamon-Schwarz (\cite{PSS}) and Albers (\cite{Albers}) respectively.

In the Lagrangian case the map actually extends to the kernel of the spectral sequence differential $\partial_1 : H^{\leq N_L^{\pi}-1}(L; R) \rightarrow H^{\leq 0}(L; R)$ and we denote this extended domain by $H^\mathrm{PSS}(L; R)$.  The reason is that this kernel maps to the $E_2$ page, from which point onwards it lies in the kernel of each page differential for degree reasons.  Hence it maps to $E_\infty \cong \operatorname{gr} HF^*_\mathrm{Zap}(L, L; R)$, where it sits in the top piece of the associated grading (which is actually the bottom right-hand end of each descending diagonal on $E_\infty$) and therefore maps from $\operatorname{gr} HF^*_\mathrm{Zap}(L, L; R)$ to $HF^*_\mathrm{Zap}(L, L; R)$ itself.  In simpler terms, a Morse cocycle on $L$ of degree $\leq N_L^\pi -1$ is Floer-closed if and only if it lies in the kernel of $\partial_1$, and is Floer-exact if it is Morse-exact.

These PSS maps preserve the units $1_X$ and $1_L$, but do not in general respect the product structures unless the total degree of the classes being multiplied is less than the minimal Chern number $N_X^{\pi}$ or the minimal Maslov number $N_L^{\pi}$, for $QH^*$ and $HF^*$ respectively.  Similarly, $\CO$ is related to the classical restriction map $i^* : H^*(X; R) \rightarrow H^*(L; R)$ by the following commutative diagram:
\begin{equation}\label{CODiagram}
\begin{tikzcd}
H^{<N^\pi_L}(X; R) \arrow{r}{i^*} \arrow{d}[swap]{\mathrm{PSS}} & H^\mathrm{PSS}(L; R) \arrow{d}{\mathrm{PSS}}
\\ QH^*_\mathrm{Zap}(X; R) \arrow{r}{\CO} & HF^*_\mathrm{Zap}(L, L; R)
\end{tikzcd}
\end{equation}
Note that the image of $H^{<N^\pi_L}(X; R)$ under $i^*$ is contained in $H^\mathrm{PSS}(L; R)$ since at chain level $i^*$ coincides with $\CO$ on $C^{< N^\pi_L}(X)$, and $\CO$ is a chain map with respect to the ordinary differential on $C^*(X)$ and the pearl differential on $C^*(L)$.

At this point we can already deduce the main workhorse of the current paper. 
%\begin{lem}
%Let $X$ be a simply connected, monotone symplectic manifold and let $L \subset X$ be a closed, monotone Lagrangian submanifold with $N_L \ge 3$. Suppose further that $H^1(L;\Z) = 0$ and $H^2(X;\Z) = \Z\langle h \rangle$ for some class $h$ which is invertible in quantum cohomology. Assume that $L$ satisfies assumption (O). Then $HF^*_\mathrm{Zap}(L, L;R)$ is $(2 N_X/N_L)$-torsion.
%\end{lem}
%\textcolor{red}{I am not sure how useful this level of generality is. For example, Givental's paper "Equivariant GW invariants", Corollary 9.3 shows that $\C\P^n$ is the only Fano complete intersection which satisfies the above requirements (unless a stronger relation than the one stated in Corollary 9.3 holds, but I think Givental would have written that instead then). If we don't find any other examples, maybe it's better to just leave the statement just for $CP^n$?}

\begin{lem}\label{lemma: torsion of Floer cohomology}
Let $L \subset \C\P^n$ be a closed, monotone Lagrangian with $N_L \ge 3$ and satisfying assumption (O). Suppose that $H^1(L;\Z) = 0$. Then $HF^*_\mathrm{Zap}(L, L;R)$ is $(2(n + 1)/N_L)$-torsion.
\end{lem}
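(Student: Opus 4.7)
The plan is to exploit the torsion of $i^*h$ in $H^2(L;\Z)$, transport it to Floer cohomology via the closed--open map $\CO$, and then spread it across $HF^*_\mathrm{Zap}(L,L;R)$ using the invertibility of the hyperplane class in the quantum cohomology of $\C\P^n$.

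Write $k = 2(n+1)/N_L$. The first step is to invoke \cref{lemma: the restriction of the hyperplane class to a lag in cpn}: its hypotheses -- $L$ totally real in $\C\P^n$ (automatic for a Lagrangian) together with $H^1(L)=0$ -- are in force, and the conclusion is that $i^*h$ has order $k$ in $H^2(L;\Z)$. In particular $k\cdot i^*h = 0$ in $H^2(L;\Z)$, and hence also in $H^2(L;R)$ after extension of scalars along $\Z\to R$.

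The second step is to use diagram~\eqref{CODiagram}. Since $N_L\ge 3$, the degree-$2$ class $h$ lies in $H^{<N_L^\pi}(\C\P^n;R)$, so the diagram yields $\CO(h) = \mathrm{PSS}(i^*h)$. Because PSS is $R$-linear,
\[
k\cdot \CO(h) \;=\; \mathrm{PSS}(k\cdot i^*h) \;=\; 0
\]
in $HF^*_\mathrm{Zap}(L,L;R)$.

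The third step is to recall that $h$ is invertible in $QH^*_\mathrm{Zap}(\C\P^n;R)$: this follows from the standard relation $h^{n+1}=t$ with $t$ the (invertible) generator of the $\pi_2$-local system (or simply $h^{n+1}=1$ after specialisation). Since $\CO$ is a unital $R$-algebra homomorphism it sends units to units, so $\CO(h)$ is invertible in $HF^*_\mathrm{Zap}(L,L;R)$. For any $x\in HF^*_\mathrm{Zap}(L,L;R)$ we then have
\[
k\cdot x \;=\; \bigl(k\cdot \CO(h)\bigr)\cdot \CO(h)^{-1}\cdot x \;=\; 0,
\]
establishing the required torsion.

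I do not anticipate a serious obstacle: the three ingredients -- the topological order computation in \cref{lemma: the restriction of the hyperplane class to a lag in cpn}, the PSS comparison~\eqref{CODiagram}, and the invertibility of $h$ in the quantum cohomology of projective space -- are all available. The only points of care are that the degree of $h$ stays below $N_L^\pi$ (forced by the hypothesis $N_L\ge 3$) so that the comparison diagram applies, and that assumption (O) is in force so that Floer theory over arbitrary $R$ is well-defined in Zapolsky's framework.
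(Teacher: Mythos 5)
Your proposal is correct and follows essentially the same route as the paper's own proof: use \cref{lemma: the restriction of the hyperplane class to a lag in cpn} (which needs $H^1(L;\Z)=0$) to kill $(2(n+1)/N_L)\,i^*h$, transfer this to $\CO(h)$ via the comparison diagram \eqref{CODiagram} (valid since $N_L\ge 3$), and conclude by invertibility of $h$ in $QH^*(\C\P^n)$. No gaps.
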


\begin{rmk}
If $R$ has characteristic $2$ we do not need to assume that $L$ satisfies assumption (O). In this case the result is only interesting if $2 (n+1)/N_L$ is odd, in which case it tells us that $HF^*_\mathrm{Zap}(L,L;\Z/2)$ vanishes.
\end{rmk}

\begin{proof}
The manifold $\C\P^n$ is simply connected, has minimal Chern number $2(n+1)$, and has $\pi_2(\C\P^n, q)$ isomorphic to $\Z$ for any base point $q$. This means that the local system $\Z[\pi_2(\C\P^n)]$ on which Zapolsky's quantum cohomology pearl complex lives is simply the constant sheaf with fibre the Novikov ring $\Z[T^{\pm 2}]$, where $T$ has degree $n+1$ (it might seem more natural to work with the Novikov ring $\Z[T^{\pm 1}]$, with $T$ assigned degree $2(n+1)$, but then we would have to introduce a square root of $T$ when we came to discuss the Floer cohomology of $L$).  Zapolsky's construction therefore yields the standard quantum cohomology ring of $\C\P^n$ \cite[Example 8.1.6]{SmallMcDuffSalamon}, namely
\[
QH^*_\mathrm{Zap} (\C\P^n; \Z) \cong \Z[h, T^{\pm 2}]/(h^{n+1}-T^2),
\]
where $h$ is (the PSS image of) the hyperplane class. 
%Note that $h$ is invertible and of degree $2$, so by the quantum module action $HF^*_\mathrm{Zap}(L, L; \Z)$ must be $2$-periodic in its $\Z$-grading. 
Our assumption $N_L \ge 3$ implies that $i^*h$ lies in $H^\mathrm{PSS}(L;R)$, and by the commutativity of \eqref{CODiagram} we have 
\[\CO(h) = \mathrm{PSS}(i^* h) \in HF^*_\mathrm{Zap}(L, L;R).\]
Since $H^1(L;\Z) = 0$, \cref{lemma: the restriction of the hyperplane class to a lag in cpn} implies that $(2(n+1)/N_L)i^*h = 0$ and so $\CO((2(n+1)/N_L)h) = 0$. Since $h$ is invertible, it follows that $HF^*_\mathrm{Zap}(L, L;R)$ is $(2(n + 1)/N_L)$-torsion.
\end{proof}

%---------------------------------------------------------------------------------------------- 
\subsection{Orientation and relative pin structures}
\label{RelPin}

In this subsection we explain how:
\begin{enumerate}[(i)]
\item\label{itm1} assumption (O) allows the definition of the local system $\mathscr{C}^*$ \cite[Lemma 4.1]{Zapolsky}
\item\label{itm2} the monodromy of the local system $\mathscr{C}^*$ can be computed (\cref{lem:explicit monodromy} below)
\item\label{itm3} the existence of a relative pin structure implies assumption (O) \cite[Remark 7.1]{Zapolsky}
\item\label{itm4} the choice of such a structure allows one to recover a more standard version of Floer theory \cite[Sections 7.2--7.4]{Zapolsky}.
\end{enumerate}
For our applications the reader can happily skip to \cref{prop Quotient Properties} if they are willing to take (\ref{itm1}), (\ref{itm3}) and (\ref{itm4}) as a black box from \cite{Zapolsky}.  The only place we explicitly use (\ref{itm2}) is in the example computation in \cref{RPnExample}.  To fix notation, let $D$ denote the closed unit disc in $\C$, and $S^1 = \partial D$ its boundary.  Let $\langle \cdot, \cdot \rangle_Z$ be the mod $2$ pairing between homology and cohomology on a space $Z$.

We begin by having a closer look at how the local system $\mathscr{C}^*$ is constructed. 
%Let $\widetilde{L}$ be the universal cover of $L$ and 
Let $\pi \colon \overline{L} \to L$ denote the cover of $L$ with fibres $\pi^{-1}(q) = \pi_2(X,L,q)$. Consider the space $C^{\infty}_{X,L} \coloneqq C^{\infty}((D,\partial D), (X,L))$. It fibres over $\overline{L}$ via a map $\psi \colon C^{\infty}_{X,L} \to \overline{L}$, which associates to every disc $u$ its relative homotopy class based at $u(1)$. It is important that the fibres of $\psi$ are connected and the evaluation map $\mathrm{ev}_1 \colon C^{\infty}_{X,L} \to L$ factors as $\mathrm{ev}_1 = \pi \circ \psi$.

Now consider the family of Fredholm operators $D_\bullet$ over $C^{\infty}_{X,L}$, where for each disc $u$ the operator
\[
D_u \colon W^{1, p}((D, \partial D), (u^*TX, u^*TL)) \to L^p(D,\overline{\Hom}_{\C}((TD, i), (u^*TX, u^*J)))
\]
is the linearisation of $\bar{\partial}_J$ (for some fixed and irrelevant choice of connection on $X$). The determinant lines of these operators give rise to a line bundle $\det(D_\bullet)$ on $C^{\infty}_{X,L}$, whose first Stiefel--Whitney class has been computed by Seidel in \cite[Lemma 11.7]{SeidelBook} (there is a slight subtlety here: the bundle is not canonically topologised, but rather there exists an uncountable family of suitable choices, as described in \cite{ZingerDeterminantLine}; this is taken care of by Zapolsky and we will not dwell on it). Namely, if $v \colon S^1 \times (D, \partial D) \to (X,L)$ is a loop in $C^{\infty}_{X,L}$ with 
$v_*[\{1\}\times D] = A \in \pi_2(X,L, v(1, 1))$, then 
\begin{equation}\label{eqSeidelFormula}
\langle w_1(\det(D_\bullet)), v \rangle_{C^{\infty}_{X, L}} = 
\langle w_2(TL), v_*[S^1 \times \partial D] \rangle_L 
+ (I_{\mu_L}(A) - 1)\langle w_1(TL), v_*[S^1 \times \{1\}]\rangle_L.
\end{equation}

\begin{rmk}
Seidel's setup and notation are slightly different.  To translate into his language we first need to trivialise $v^*TX$, identifying all fibres with a standard symplectic vector space $V$.  At each time $t$, we obtain a loop $v|_{\{t\} \times \partial D}$ in the Lagrangian Grassmannian $\mathrm{Gr}(V)$ of $V$, and hence a point in its free loop space $\mathscr{L} \mathrm{Gr}(V)$.  As $t$ varies in $S^1$, these points sweep a $1$-chain in $\mathscr{L} \mathrm{Gr}(V)$, and we denote this by $\sigma$.  Seidel introduces operators
\[
T \colon H^{k+1}(\mathrm{Gr}(V); \Z) \to H^k(\mathscr{L} \mathrm{Gr}(V);\Z) \text{\quad and \quad} U\colon H^k(\mathrm{Gr}(V); \Z) \to H^k(\mathscr{L} \mathrm{Gr}(V);\Z)
\]
on cohomology, whose duals take a $k$-chain on $\mathscr{L} \mathrm{Gr}(V)$ and output, respectively, the $(k+1)$- and $k$-chains on $\mathrm{Gr}(V)$ swept by the $k$-chain of whole loops and by the $k$-chain of initial points of the loops.  His formula is given in terms of these operators as
\[
\langle w_1(\det(D_\bullet)), \sigma\rangle_{\mathscr{L} \mathrm{Gr}(V)} = 
\langle T(w_2),\sigma \rangle_{\mathscr{L} \mathrm{Gr}(V)} + 
\langle (T(\mu) - 1)\smile U(\mu), \sigma \rangle_{\mathscr{L} \mathrm{Gr}(V)}.
\]
The right-hand side can be shown to be independent of the initial choice of trivialisation (changing trivialisation does not affect $T(w_2)$ and $T(\mu)$ and preserves the parity of $U(\mu)$).  To derive \eqref{eqSeidelFormula}, simply use the fact that $w_1$ of the tautological bundle over $\mathrm{Gr}(V)$ equals the mod $2$ reduction of $\mu \in H^1(\mathrm{Gr}(V);\Z)$.
\end{rmk}

Recall however that for the construction of the local system $\mathscr{C}^*$ we are actually interested in the family of operators $D_\bullet\# 0$, whose determinant line bundle $\det(D_\bullet \# 0)$ over $C^{\infty}_{X,L}$ is canonically (up to a positive real multiple) isomorphic to $\det(D_\bullet) \otimes \mathrm{ev}_1^*(\det(TL))$.
So we have:
\begin{equation}\label{MonodromySign}
\langle w_1(\det(D_\bullet\# 0)), v \rangle_{C^{\infty}_{X, L}} = 
\langle w_2(TL), v_*[S^1 \times \partial D] \rangle_L 
+ I_{\mu_L}(A)\langle w_1(TL), v_*[S^1 \times \{1\}]\rangle_L.
\end{equation}

Now suppose that the loop $v$ is contained entirely in a fibre $\psi^{-1}(q, A)$. Then $v(S^1 \times \{1\}) = q$ so the second term in \eqref{MonodromySign} vanishes, and assumption (O) implies the vanishing of the first term (indeed, assumption (O) is equivalent to the vanishing of the first term for all loops whose image under $\ev_1$ is constant). Thus the restriction $\at{\det(D_\bullet \# 0)}{\psi^{-1}(q, A)}$ is orientable for all $q$ and $A$ if and only if assumption (O) holds. Assume that this is the case and define $\mathscr{C}(q, A)$ to be the free rank $1$ $R$-module generated by its orientations (this makes sense, meaning that there are only two possible orientations, which differ by sign, because $\psi^{-1}(q, A)$ is connected). These modules now define a local system $\overline{\mathscr{C}}$ over $\overline{L}$ and \eqref{MonodromySign} in principle allows us to also compute the monodromy of $\overline{\mathscr{C}}$. By taking the direct sums of the fibres of $\overline{\mathscr{C}}$ over points in the fibres of $\pi \colon \overline{L} \to L$---i.e.~by pushing forward $\overline{\mathscr{C}}$ by $\pi$---we obtain the local system $\mathscr{C}^*$ on $L$. 

%For loops along which $\gamma(t) = u(t, 1)$ is constant, the second term in \eqref{MonodromySign} vanishes.  Assumption (O) is equivalent to the first term also vanishing for all such loops, which is what is needed in order for the bundles used to define the $C(q, A)$ to be orientable. 

Suppose now that $L$ is relatively pin, meaning that $w_2(TL)$ or $w_2(TL) + w_1(TL)^2$ is in the image of the restriction map $i^* : H^2(X; \Z/2) \rightarrow H^2(L; \Z/2)$. The positive consequences of this assumption are twofold: first, it implies that $L$ satisfies assumption (O) and so the local system $\overline{\mathscr{C}}$ is well defined; second, it significantly simplifies the computation of the monodromy of $\overline{\mathscr{C}}$. These follow because the first term in \eqref{MonodromySign} is zero for \emph{all} loops of discs, not just the ones contained in a single fibre of $\psi$. To see this, suppose first that $w_2(TL) = i^* b$ for some background class $b$ in $H^2(X; \Z/2)$.  We then have
\[
\big\langle w_2(TL), v_*[S^1 \times \partial D] \big\rangle_L = \big\langle b, \partial(v_*[S^1 \times D]) \big\rangle_X = 0.
\]
If, on the other hand, we have $w_2(TL) + w_1(TL)^2 = i^*b$ then the same argument shows that
\[
\big\langle w_2(TL), v_*[S^1 \times \partial D] \big\rangle_L = \big\langle (v^*w_1(TL))^2, [S^1 \times \partial D] \big\rangle_{S^1 \times \partial D},
\]
and the right-hand side vanishes since the squaring map $H^1(T^2; \Z/2) \rightarrow H^2(T^2; \Z/2)$ on the torus $T^2 = S^1 \times \partial D$ is zero.  We deduce:

\begin{lem}
\label{lem:explicit monodromy}
If $L$ is relatively pin and $\gamma \in \pi_1(L, q)$ fixes a class $A \in \pi_2(X,L,q)$ (i.e. $\gamma$ lifts to a loop in $\overline{L}$), then the monodromy action of $\gamma$ on $\mathscr{C}(q, A)$ is trivial if the Maslov index of $A$ is even or if $\gamma$ is an orientation-preserving loop in $L$, and is multiplication by $-1$ otherwise.
\end{lem}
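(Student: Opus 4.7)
The plan is to apply the formula~\eqref{MonodromySign} directly to a loop in $C^\infty_{X,L}$ that realises the monodromy of $\gamma$ on $\mathscr{C}(q,A)$.

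First I would construct such a loop. Since $\gamma$ fixes $A$, by definition it lifts to a loop $\bar\gamma$ in $\overline{L}$ based at $(q,A)$. Because the fibres of $\psi:C^\infty_{X,L}\to\overline{L}$ are connected, I would pick any disc $u_0\in\psi^{-1}(q,A)$ and then use path lifting together with fibrewise connectedness to produce a loop $v:S^1\to C^\infty_{X,L}$ with $v(1)=u_0$ and $\psi\circ v = \bar\gamma$. By construction the boundary curve $t\mapsto v(t)(1)$ in $L$ is precisely $\gamma$, i.e.\ $v_*[S^1\times\{1\}] = [\gamma]\in H_1(L;\Z/2)$, and $v_*[\{1\}\times D] = A$.

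Next I would compute the two terms on the right-hand side of~\eqref{MonodromySign}. The first term, involving $w_2(TL)$ and the boundary torus $v_*[S^1\times\partial D]$, vanishes by exactly the argument already carried out in the paragraph preceding the lemma: relative pinness implies that $w_2(TL)$ (possibly after adding $w_1(TL)^2$) pulls back from $X$, and the cylinder $v_*[S^1\times\partial D]$ bounds the disc family $v_*[S^1\times D]$ in $X$, so the pairing is zero (in the $w_2+w_1^2$ case one uses the vanishing of the squaring map $H^1(T^2;\Z/2)\to H^2(T^2;\Z/2)$). The second term simplifies to $I_{\mu_L}(A)\,\langle w_1(TL),[\gamma]\rangle_L$ by the identification of $v_*[S^1\times\{1\}]$ with $\gamma$.

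Finally, I would translate the total mod $2$ pairing into a sign. The monodromy of $\overline{\mathscr{C}}$ around the loop $\bar\gamma$ acts on orientations of the index bundle of $D_\bullet\#0$ by $(-1)$ raised to the pairing $\langle w_1(\det(D_\bullet\#0)),v\rangle$, and pushing forward along $\pi$ gives precisely the monodromy action of $\gamma$ on $\mathscr{C}(q,A)\subset\mathscr{C}^*_q$. Combining the two computations, the sign is $(-1)^{I_{\mu_L}(A)\langle w_1(TL),[\gamma]\rangle_L}$, which is $+1$ exactly when $I_{\mu_L}(A)$ is even or $\gamma$ is orientation-preserving, and $-1$ otherwise.

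The only real subtlety—and the step I would spend most care on—is verifying that the loop $v$ genuinely computes the monodromy of $\mathscr{C}(q,A)$ viewed as a rank-one summand of the pushforward local system $\mathscr{C}^*=\pi_*\overline{\mathscr{C}}$ on $L$; everything else is a direct substitution into Seidel's formula. Once that identification is secured, the lemma is immediate.
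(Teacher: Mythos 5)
Your proposal is correct and follows essentially the same route as the paper: the lemma is deduced directly from formula \eqref{MonodromySign} by observing that relative pinness kills the $w_2$ term for \emph{every} loop of discs (via the bounding argument, plus the vanishing of the squaring map on $H^1(T^2;\Z/2)$ in the $w_2+w_1^2$ case), leaving only the sign $(-1)^{I_{\mu_L}(A)\langle w_1(TL),\gamma\rangle_L}$. The extra care you take in constructing the loop $v$ over a lift $\bar\gamma$ and closing it up within the connected fibre $\psi^{-1}(q,A)$ is exactly the identification the paper leaves implicit.
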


%assumption (O) is automatically satisfied, because the first term in \eqref{MonodromySign} is zero for \emph{all} loops of discs, not just the ones for which the value at $1$ is fixed.  To see this, suppose first that $w_2(TL) = i^* b$ for some background class $b$ in $H^2(X; \Z/2)$.  We then have
%\[
%\big\langle w_2(TL), u_*[S^1 \times \partial D] \big\rangle_L = \big\langle b, \partial(u_*[S^1 \times D]) %\big\rangle_X = 0.
%\]
%If, on the other hand, we have $w_2(TL) + w_1(TL)^2 = i^*b$ then the same argument shows that
%\[
%\big\langle w_2(TL), u_*[S^1 \times \partial D] \big\rangle_L = \big\langle (u^*w_1(TL))^2, [S^1 \times %\partial D] \big\rangle_{S^1 \times \partial D},
%\]
%and the right-hand side vanishes since the squaring map $H^1(T^2; \Z/2) \rightarrow H^2(T^2; \Z/2)$ on the torus $T^2 = S^1 \times \partial D$ is zero.

%\textcolor{red}{Observe further that when $L$ is relatively pin, we can easily compute the monodromy of the local system $\mathscr{C}^*}$ as it only depends on the action of $\pi_1(L ,q)$ on $\pi_2(X,L,q)$ and the second term in \eqref{MonodromySign}. Indeed, when $\gamma$ is non-constant but fixes the class $A$, the second term in \eqref{MonodromySign} gives us the monodromy action of $\gamma$ on $C(q, A)$. Note that this action is trivial if the Maslov index of $A$ is even or if $\gamma$ is an orientation-preserving loop in $L$.}

As described in \cite[Proposition 7.4, Section 7.3]{Zapolsky}, a choice of relative pin structure defines an isomorphism between $\mathscr{C}(q, A)$ and $\mathscr{C}(q, A')$ whenever $I_{\mu_L}(A) = I_{\mu_L}(A')$, and quotienting by these identifications we obtain a local system $\mathscr{C}_\mu^*$ which has rank $1$ in degrees $\dots, -N_L^{\pi}, 0, N_L^{\pi}, 2N_L^{\pi}, \dots$ (and zero in every other degree).  Moreover, the choice defines a canonical trivialisation of the even degree part of $\mathscr{C}_\mu^*$, which then forms a constant sheaf of rings on $L$.  In more traditional versions of Floer theory the fibre of this sheaf of rings is viewed as the Novikov ring $\Lambda = R[T^{\pm 1}]$, were the variable $T$ has degree $N_L^{\pi}$ if $N_L^{\pi}$ is even (for instance if $L$ is orientable) and degree $2N_L^{\pi}$ otherwise.

If $N_L^{\pi}$ is odd then the odd degree part of $\mathscr{C}_\mu^*$ looks like $\Lambda \otimes \mathscr{C}_\mu^{N_L^{\pi}}$ and this local system may be twisted (contrast this with the even degree part, which is not just untwisted but canonically trivialised by the relative pin structure).  When the $\pi_1(L, q)$-action on $\pi_2(X, L, q)$ is trivial the twisting can be computed from \cref{lem:explicit monodromy} to be exactly $\det (TL)$, but when it is non-trivial the twisting may in general depend on the choice of relative pin structure.

The quotient procedure is compatible with the differential \cite[Section 7.3]{Zapolsky} so one obtains a cohomology group $HF^*_{\mathrm{Zap}, \mu}(L, L;R)$, and since it also respects the bigrading on the complex we still get the spectral sequence. In addition, quotienting is compatible with the multiplication \cite[Section 7.3]{Zapolsky} and with the quantum module structure \cite[Section 7.4]{Zapolsky} after applying a similar procedure to $QC^*_\mathrm{Zap}(X; R)$ \cite[Section 7.2]{Zapolsky}.  Although the latter does not depend on a choice of spin structure or similar on $X$, it does depend on the background class $b$ of the relative pin structure on $L$.  We denote this quotiented Zapolsky version of quantum cohomology by $QH^*(X, b; R)$ since it coincides with the standard $b$-twisted version of quantum cohomology over the Novikov ring $R[S^{\pm 1}]$, where $S$ has degree $2N^\pi_X$.  We think of $S$ as $T^d$, where $d=2N^\pi_X/|T|$, so that $R[S^{\pm1}]$ sits inside $\Lambda$.

In the setting relevant to us, the upshot of all of this is the following.

\begin{prop}
\label{prop Quotient Properties}
Suppose $L$ is relatively pin, that we fix a choice of relative pin structure on $L$ with background class $b \in H^2(X; \Z/2)$, and that $\pi_1(L, q)$ acts trivially on $\pi_2(X, L, q)$.  Then there are quantum cohomology $QH^*(X, b; R)$ and Floer cohomology rings $HF^*_{\mathrm{Zap}, \mu}(L, L; R)$ such that:
\begin{enumerate}[(a)]
\item\label{itma} If $N^\pi_L$ is even then $HF^*_{\mathrm{Zap}, \mu}(L, L; R)$ is naturally a module over the Novikov ring $R[T^{\pm 1}]$, where $T$ has degree $N^\pi_L$, and there is a spectral sequence
\[
E_1 = H^*(L; R[T^{\pm 1}]) \implies HF^*_{\mathrm{Zap}, \mu}(L, L; R).
\]
\item\label{itmb} If $N^\pi_L$ is odd then $HF^*_{\mathrm{Zap}, \mu}(L, L; R)$ is naturally a module over the Novikov ring $R[T^{\pm 1}]$, where $T$ has degree $2N^\pi_L$, and there is a spectral sequence
\[
E_1 = H^*(L; R[T^{\pm 1}]) \oplus H^*(L; \mathscr{R})[N^\pi_L] \implies HF^*_{\mathrm{Zap}, \mu}(L, L; R),
\]
where $\mathscr{R}$ is the rank $1$ local system of $R[T^{\pm 1}]$-modules on $L$ whose monodromy around a loop $\gamma$ is $(-1)^{\langle w_1(TL), \gamma \rangle_L}$ and where $[N^\pi_L]$ denotes grading shift.
\item\label{itmc} $QH^*(X, b; R)$ is a module over $R[T^{\pm d}]$, where $d = 2N^\pi_X / |T|$, and is additively isomorphic to $H^*(X; R[T^{\pm d}])$.  The product is deformed by counts of holomorphic spheres with signs twisted by $(-1)^{\langle b, A \rangle_X}$, where $A$ is the class of the sphere.
\item\label{itmd} There are PSS maps $H^*(X; R) \to QH^*(X, b; R)$ and $H^{\mathrm{PSS}}(L; R) \to HF^*_{\mathrm{Zap}, \mu}(L, L; R)$, where the latter comes from mapping part of the $E_1$ page of the above spectral sequences to the $E_\infty$ page (as described in \cref{classical comparison}), and a map $\CO : QH^*(X, b; R) \to HF^*_{\mathrm{Zap}, \mu}(L, L; R)$ of unital $R[T^{\pm d}]$-algebras which fits into a commutative diagram analogous to \eqref{CODiagram}.
\end{enumerate}
\end{prop}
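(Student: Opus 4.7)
The plan is to derive all four parts by assembling Zapolsky's quotient procedure from \cite[Sections 7.1--7.4]{Zapolsky} together with the monodromy calculation of \cref{lem:explicit monodromy}, specialised to the hypothesis that $\pi_1(L,q)$ acts trivially on $\pi_2(X,L,q)$. The bulk of the work has already been done in the discussion preceding the statement, so what remains is to unpack the consequences under these specific assumptions.

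First, I would fix a relative pin structure on $L$ with background class $b$ and recall, as in the paragraph before the proposition, that quotienting $\mathscr{C}^*$ by the identifications induced by the pin structure produces a local system $\mathscr{C}^*_\mu$ of rank one in degrees divisible by $N_L^\pi$ and zero otherwise, and that the even-degree part is canonically trivialised as a sheaf of graded rings. Gathering the generators of this trivialisation into powers of a single variable yields a subsheaf of rings isomorphic to $R[T^{\pm 1}]$, with $|T| = N_L^\pi$ when $N_L^\pi$ is even and $|T| = 2N_L^\pi$ otherwise. For part \eqref{itma}, when $N_L^\pi$ is even $\mathscr{C}^*_\mu$ coincides with this Novikov sheaf, and the quotient Oh spectral sequence of \cref{CanonicalComplex} has first page $H^*(L;\mathscr{C}^*_\mu) = H^*(L;R[T^{\pm 1}])$, giving the statement directly.

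Part \eqref{itmb} requires identifying the odd-degree piece $\mathscr{C}^{N_L^\pi}_\mu$ of the local system. Here the hypothesis $\pi_1(L,q)\curvearrowright\pi_2(X,L,q)$ trivial ensures that every loop $\gamma$ in $L$ fixes every class $A$ and hence lifts to a loop in $\overline L$, so \cref{lem:explicit monodromy} applies to every $\gamma$. For $A$ of odd Maslov index the lemma then gives monodromy $(-1)^{\langle w_1(TL),\gamma\rangle_L}$, which is exactly the defining monodromy of $\mathscr{R}$. Consequently the odd-degree summand of $\mathscr{C}^*_\mu$ is $\mathscr{R}[N_L^\pi]\otimes_R R[T^{\pm 1}]$, producing the second term of the $E_1$-page after taking cohomology. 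Part \eqref{itmc} is the parallel quotient for $QC^*_\mathrm{Zap}(X;R)$ of \cite[Section 7.2]{Zapolsky}: simple connectedness of $X$ makes all the relevant local systems constant, the additive identification with $H^*(X;R[T^{\pm d}])$ is immediate, and the effect of the background class on the signs is exactly the standard $(-1)^{\langle b,A\rangle_X}$ deformation of the sphere counts, as spelled out by Zapolsky.

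Finally, part \eqref{itmd} follows from \cite[Sections 7.3--7.4]{Zapolsky}: the PSS and closed--open maps are defined on the unquotiented complexes and are compatible with the pin-structure identifications, so they descend to the quotients with their ring, module and unit structures intact; the PSS map on the Lagrangian side extends to $H^\mathrm{PSS}(L;R)$ via the same degree argument as in \cref{classical comparison}. Commutativity of the diagram analogous to \eqref{CODiagram} follows because on classes of total degree below $N_L^\pi$ the map $\CO$ agrees with $i^*$ at the chain level. The only non-formal step is the monodromy identification in \eqref{itmb}, and the main subtlety to check there is that the sign conventions in \cref{lem:explicit monodromy} match those used to define $\mathscr{R}$; with those conventions pinned down, the remainder of the proposition is bookkeeping inside Zapolsky's framework.
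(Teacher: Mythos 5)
Your proposal is correct and follows essentially the same route as the paper, which presents this proposition as the ``upshot'' of the discussion in \cref{RelPin}: Zapolsky's quotient procedure from his Sections 7.1--7.4 supplies the ring/module structures, the spectral sequences, and the descent of the PSS and $\CO$ maps, while the identification of the odd-degree twisting with $\mathscr{R} \cong \det(TL)$ comes from \cref{lem:explicit monodromy} applied under the hypothesis that $\pi_1(L,q)$ acts trivially on $\pi_2(X,L,q)$, exactly as you argue. The only cosmetic difference is your appeal to simple connectedness of $X$ in part (c), which is not actually needed (the quotiented sphere-side local system is constant because the $\pi_1(X)$-action preserves Chern number and the sphere operators are canonically oriented).
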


\begin{rmk}
\label{rmk Char 2}
If $R$ has characteristic $2$ then we need not assume that $L$ is relatively pin, and choices of relative pin structure and background class are irrelevant.  Moreover, the local system $\mathscr{R}$ is trivial.
\end{rmk}

%This quotient procedure respects the bigrading on the complex and hence preserves the spectral sequence.  It is also compatible with the multiplication, and with the quantum module structure after applying a similar quotient procedure to $QC^*_\mathrm{Zap}(X; R)$.  Although the latter does not depend on a choice of spin structure or similar on $X$, it does depend on the background class of the relative \textcolor{red}{spin???} structure on $L$.

%---------------------------------------------------------------------------------------------
\subsection{Twisted coefficients}
\label{TwistedCoeffs}

As in traditional Lagrangian Floer theory, one can twist Zapolsky's Floer complex by local systems $\mathscr{E}^1$ and $\mathscr{E}^2$ of $R$-modules on $L$.  Now the complex is
\begin{equation}\label{eq def of HFzap with local coeffs}
CF^{r, s}_\mathrm{Zap} ((L, \mathscr{E}^1), (L, \mathscr{E}^2)) = \bigoplus_{\substack{q \in \mathrm{Crit}(f) \\ |q| = s}} \mathscr{C}^r_q \otimes C(q) \otimes \Hom_R (\mathscr{E}^1_q, \mathscr{E}^2_q).
\end{equation}
If a certain obstruction class vanishes (see \cite[Theorem 2.2.12]{KonstantinovThesis}, adapted to Zapolsky's setup in \cite[Appendix A.4]{SmithThesis}) then the differential squares to zero and we can take cohomology.  This is always the case when the minimal Maslov number of $L$ is at least $3$, which it will be in our applications.  One can also extend the Floer product to
\begin{equation}\label{eq product with local systems}
HF^*_\mathrm{Zap} ((L, \mathscr{E}^1), (L, \mathscr{E}^2)) \otimes_R HF^*_\mathrm{Zap} ((L, \mathscr{E}^0), (L, \mathscr{E}^1)) \rightarrow HF^*_\mathrm{Zap} ((L, \mathscr{E}^0), (L, \mathscr{E}^2)).
\end{equation}
The quotient procedure which defines $HF^*_{\mathrm{Zap}, \mu}$ only concerns the $\mathscr{C}^r_q$ factor of each summand of \eqref{eq def of HFzap with local coeffs} and so descends to this setting to give the complex 
\[
CF^{r, s}_{\mathrm{Zap}, \mu} ((L, \mathscr{E}^1), (L, \mathscr{E}^2)) = \bigoplus_{\substack{q \in \mathrm{Crit}(f) \\ |q| = s}} \mathscr{C}^r_{\mu, q} \otimes C(q) \otimes \Hom_R (\mathscr{E}^1_q, \mathscr{E}^2_q).
\]
 We denote its homology by $HF^*_{\mathrm{Zap}, \mu} ((L, \mathscr{E}^1), (L, \mathscr{E}^2))$.

We will only be interested in the situation where $\mathscr{E}^1$ is trivial and $\mathscr{E}^2$ is the local system corresponding to a cover $L'$ of $L$, in which case we denote the twisted Floer cohomology by $HF^*_\mathrm{Zap, \mu}(L, L'; R)$.  Floer theory for pairs of local systems of this form is essentially equivalent to Damian's lifted Floer homology \cite{Damian} on the cover $L'$.  However we choose to phrase it in the above way in order to fit it into the wider context of Floer theory with local systems, closed--open string maps, and Zapolsky's orientation schemes.

After incorporating local systems of this type, the analogue of \cref{prop Quotient Properties} is as follows.

\begin{prop}
\label{prop Twisted Properties}
Setup as in \cref{prop Quotient Properties}, with the additional assumption that $N^\pi_L \geq 3$.  For any cover $L'$ of $L$ we have a lifted Floer cohomology ring $HF^*_{\mathrm{Zap}, \mu}(L, L'; R)$ which satisfies the following modified versions of properties (\ref{itma})--(\ref{itmd}) from \cref{prop Quotient Properties}:
\begin{enumerate}[(A)]
\item\label{itmA} If $N^\pi_L$ is even then there is a spectral sequence
\[
E_1 = H^*_c(L'; R[T^{\pm1}]) \implies HF^*_{\mathrm{Zap}, \mu}(L, L'; R),
\]
where ${}_c$ denotes compact support, and $T$ has degree $N^\pi_L$.
\item\label{itmB} If $N^\pi_L$ is odd then there is a spectral sequence
\[
E_1 = H^*_c(L'; R[T^{\pm 1}]) \oplus H^*_c(L'; \mathscr{R}')[N^\pi_L] \implies HF^*_{\mathrm{Zap}, \mu}(L, L'; R),
\]
where $\mathscr{R}'$ is the pullback of $\mathscr{R}$ (from \cref{prop Quotient Properties}) to $L'$, and $T$ has degree $2N^\pi_L$.
\item\label{itmC} Identical to (\ref{itmc}).
\item\label{itmNewD} Identical to (\ref{itmd}).
\item\label{itmD} $HF^*_{\mathrm{Zap}, \mu}(L, L'; R)$ is a right module over $HF^*_{\mathrm{Zap}, \mu}(L, L; R)$, and thus over $QH^*(X, b; R)$ via $\CO$.
\end{enumerate}
\end{prop}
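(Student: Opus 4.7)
The plan is to follow the blueprint of \cref{prop Quotient Properties} verbatim, feeding in the local system attached to the cover. Let $\pi : L' \to L$ be the cover with typical fibre $F = \pi^{-1}(q)$, and let $\mathscr{E}$ be the local system of $R$-modules on $L$ whose stalk at $q$ is the free $R$-module $R[F]$ with monodromy given by the deck-transformation action of $\pi_1(L, q)$ on $F$. I would then set
\[
HF^*_\mathrm{Zap}(L, L'; R) \coloneqq HF^*_\mathrm{Zap}((L, R), (L, \mathscr{E})),
\]
using \eqref{eq def of HFzap with local coeffs} with $\mathscr{E}^1 = R$ trivial and $\mathscr{E}^2 = \mathscr{E}$. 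The hypothesis $N_L^\pi \geq 3$ is exactly what is needed to make the obstruction class of \cite[Theorem 2.2.12]{KonstantinovThesis} (adapted in \cite[Appendix A.4]{SmithThesis}) vanish, giving $\partial^2 = 0$. The $\mu$-quotient procedure acts only on the $\mathscr{C}^r_q$ factor of each summand of \eqref{eq def of HFzap with local coeffs} and therefore commutes with tensoring by $\Hom_R(R, \mathscr{E}_q) = \mathscr{E}_q$, producing a well-defined $HF^*_{\mathrm{Zap},\mu}(L, L'; R)$ that still carries the $r$-filtration.

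For (\ref{itmA}) and (\ref{itmB}) I would filter by this $r$-grading exactly as in \cref{CanonicalComplex}, obtaining a spectral sequence with
\[
E_1 = H^*(L; \mathscr{C}_\mu^* \otimes \mathscr{E}).
\]
The key identification is the general fact that for any local system $\mathcal{L}$ on $L$ one has $H^*(L; \mathcal{L} \otimes \mathscr{E}) \cong H^*_c(L'; \pi^*\mathcal{L})$; sheaf-theoretically this is the projection formula combined with $\pi_! \underline{R} = \mathscr{E}$ (the fibre being discrete so $R\pi_! = \pi_!$), and singularly it expresses that a compactly supported cochain on $L'$ is the same datum as a cochain on $L$ valued in the fibrewise finite $R[F]$. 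Feeding in the summands of $\mathscr{C}_\mu^*$ described just before \cref{prop Quotient Properties}—the trivial sheaf of rings $R[T^{\pm 1}]$ in even Maslov degree and, when $N_L^\pi$ is odd, a degree shift of $R[T^{\pm 1}] \otimes \det(TL)$ in odd Maslov degree—yields precisely the $E_1$ pages in the statement, with $\mathscr{R}'$ defined as the pullback of $\mathscr{R}$ along $\pi$.

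Parts (\ref{itmC}) and (\ref{itmNewD}) require no new proof, being the statements (\ref{itmc}) and (\ref{itmd}) of \cref{prop Quotient Properties} which do not involve $L'$. For (\ref{itmD}), the right module structure comes from specialising \eqref{eq product with local systems} to $\mathscr{E}^0 = \mathscr{E}^1 = R$ and $\mathscr{E}^2 = \mathscr{E}$, which after $\mu$-quotient yields a pairing
\[
HF^*_{\mathrm{Zap},\mu}(L, L'; R) \otimes_R HF^*_{\mathrm{Zap},\mu}(L, L; R) \to HF^*_{\mathrm{Zap},\mu}(L, L'; R);
\]
its associativity and unitality are the standard Y-shaped pearl gluing arguments of \cite[Section 4.2.3]{Zapolsky}, and precomposing with $\CO$ from (\ref{itmNewD}) promotes it to a right $QH^*(X, b; R)$-module structure. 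The only step with any genuine content beyond bookkeeping is the sheaf-theoretic identification in the second paragraph; in particular the appearance of \emph{compactly supported} cohomology on the $E_1$ page is precisely where the possibility that $L'$ is non-compact is felt. Once that is in place, everything else is a mechanical adaptation of \cref{prop Quotient Properties} and the constructions in \cite{Zapolsky}.
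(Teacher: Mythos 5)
Your proposal is correct and follows essentially the same route as the paper's sketch proof: you define the lifted theory as $HF^*_{\mathrm{Zap},\mu}((L,R),(L,\mathscr{E}))$ for the local system $\mathscr{E}$ attached to the cover, filter by the $r$-grading to get the $E_1$ page, and obtain the module structure by specialising \eqref{eq product with local systems} to $\mathscr{E}^0=\mathscr{E}^1$ trivial, exactly as the paper does. The only (harmless) divergence is in the one substantive step, the identification $H^*(L;\mathcal{L}\otimes\mathscr{E})\cong H^*_c(L';\pi^*\mathcal{L})$: you argue sheaf-theoretically via the projection formula and $\pi_!\underline{R}\cong\mathscr{E}$, whereas the paper compares singular chain complexes of $L$ with coefficients in $\mathscr{E}$ to those of $L'$ and then applies Poincar\'e duality.
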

\begin{proof}[Sketch proof]
In the untwisted case (i.e.~in \cref{prop Quotient Properties}) the $E_1$ page is the (Morse) cohomology of $L$ with coefficients in the constant sheaf $R[T^{\pm1}]$, plus---when $N^\pi_L$ is odd---another copy of the cohomology of $L$ but with coefficients in $\mathscr{R}$.  Twisting by $\mathscr{E}^1$ (trivial) and $\mathscr{E}^2$ (corresponding to $L'$) amounts to tensoring the local systems with $\sheafHom (\mathscr{E}^1, \mathscr{E}^2)$, where $\sheafHom$ is sheaf hom (the $\sheafHom$ and tensor product are both taken over the constant sheaf $R$), so the $E_1$ page is given by
\[
H^*(L; \sheafHom (\mathscr{E}^1, \mathscr{E}^2) \otimes_R R[T^{\pm1}]) \; \text{or} \; H^*(L; \sheafHom (\mathscr{E}^1, \mathscr{E}^2) \otimes_R R[T^{\pm1}]) \oplus H^*(L; \sheafHom (\mathscr{E}^1, \mathscr{E}^2) \otimes_R \mathscr{R})[N_L^\pi]
\]
depending on the parity of $N^\pi_L$.  Strictly we are working with \emph{Morse} cohomology with local systems here, but it is proved in \cite{BanyagaHurtubiseSpaeth} that this coincides with other standard constructions of cohomology with local systems.

To identify this twisted cohomology, note that the singular chain complex of $L$ with coefficients in $\sheafHom (\mathscr{E}^1, \mathscr{E}^2) \otimes_R R[T^{\pm 1}]$ is equivalent to the ordinary singular chain complex of $L'$ over $R[T^{\pm 1}]$: both are freely generated over $R[T^{\pm 1}]$ by singular simplices in 
$L$ with lifts to $L'$.  By Poincar\'e duality we thus have
\[
H^*(L; \sheafHom (\mathscr{E}^1, \mathscr{E}^2) \otimes_R R[T^{\pm1}]) \cong H^*_c(L'; R[T^{\pm 1}]).
\]
Twisting the left-hand side by $\mathscr{R}$ corresponds to twisting the right by $\mathscr{R}'$.  This proves (\ref{itmA}) and (\ref{itmB}).

To get (\ref{itmD}) simply take $\mathscr{E}^0 = \mathscr{E}^1$ to be trivial in \eqref{eq product with local systems} to see that $HF^*_\mathrm{Zap}(L, L'; R)$ is a right (unital) module over $HF^*_{\mathrm{Zap}}(L,L;R)$ and hence also (via $\CO$) over quantum cohomology. Everything descends to the quotient determined by the choice of relative pin structure.
\end{proof}

%---------------------------------------------------------------------------------------------
\subsection{The example of $\R\P^n$}
\label{RPnExample}

To make all this more concrete we now describe how everything works for $(X, L) = (\C\P^n, \R\P^n)$ over the ground ring $R=\Z$, assuming $n \geq 2$. The Floer cohomology in this case was calculcated by Zapolsky in \cite[Section 8.1]{Zapolsky} but by different means. Our approach relies heavily on the spectral sequence and can serve as a guide to the proof of \cref{Theorem1} in \cref{section proof}, where our aim will be to show that any monotone Lagrangian $L$ in $\C\P^n$ of minimal Maslov number $n + 1$ behaves like $\R\P^n$.

%\textcolor{red}{Get rid of overlap with earlier.  Make hyperplane class name $h$ vs $H$ consistent}
%First note that $\C\P^n$ has minimal Chern number $2(n+1)$, and is simply connected with $\pi_2(\C\P^n, q)$ isomorphic to $\Z$ for any base point $q$.  This means that the local system $\Z[\pi_2(\C\P^n)]$ on which Zapolsky's quantum cohomology pearl complex lives is simply the constant sheaf with fibre the Novikov ring $\Z[T^{\pm 2}]$, where $T$ has degree $n+1$ (it might seem more natural to work with the Novikov ring $\Z[T^{\pm 1}]$, with $T$ assigned degree $2(n+1)$, but then we would have to introduce a square root of $T$ when we came to discuss the Floer cohomology of $\R\P^n$).  The quantum cohomology is then
%\[
%QH^*_\mathrm{Zap} (\C\P^n; \Z) \cong \Z[H, T^{\pm 2}]/(H^{n+1}-T^2),
%\]
%where $H$ is (the PSS image of) the hyperplane class.  Note that $H$ is invertible and of degree $2$, so by the quantum module action $HF^*_\mathrm{Zap}(\R\P^n, \R\P^n; \Z)$ must be $2$-periodic in its $\Z$-grading.

First observe that the map $H^2(\C\P^n; \Z/2) \rightarrow H^2(\R\P^n; \Z/2)$ is surjective (in fact, an isomorphism), since the restriction of the tautological complex line bundle on $\C\P^n$, whose second Stiefel--Whitney class generates $H^2(\C\P^n; \Z/2)$, is the direct sum of two copies of the tautological real line bundle on $\R\P^n$, whose squared first Stiefel--Whitney class generates $H^2(\R\P^n; \Z/2)$.  This means that $\R\P^n$ is relatively pin and thus satisfies assumption (O).

The group $\pi_2(\C\P^n, \R\P^n, q)$ is isomorphic to $\Z$ (for any base point $q$) and is generated by the class of a disc of Maslov index $n+1$. In particular, there is only one homotopy class of discs of each possible Maslov index, so we have that $\mathscr{C}^*_\mu$ coincides with $\mathscr{C}^*$ and the only purpose of the quotienting procedure from \cref{RelPin} is to identify the even degree part of $\mathscr{C}_\mu^*$ with the constant sheaf $R[T^{\pm 1}]$.  The action of $\pi_1(\R\P^n, q)$ on $\pi_2(\C\P^n, \R\P^n, q)$ is trivial, because it preserves Maslov index, so by \cref{lem:explicit monodromy} the odd degree part of $\mathscr{C}^*_\mu$ (which is only non-zero if $n$ is even) has monodromy $-1$ around the generator of $\pi_1(\R\P^n, q)$.

From this discussion, or directly from \cref{prop Quotient Properties}(\ref{itma}), if $n$ is odd then the first page of the Oh spectral sequence is
\[
\dots \rightarrow H^*(\R\P^n; \Z)[n] \rightarrow H^*(\R\P^n; \Z) \rightarrow H^*(\R\P^n; \Z)[-n] \rightarrow H^*(\R\P^n; \Z)[-2n] \rightarrow \dots
\]
(each term represents a column, the square brackets denote grading shift as usual, and the arrows represent the differential which maps horizontally from one column to the next), whilst if $n$ is even then by \cref{prop Quotient Properties}(\ref{itmb}) it is
\[
\dots \rightarrow H^*(\R\P^n; \mathscr{L})[n] \rightarrow H^*(\R\P^n; \Z) \rightarrow H^*(\R\P^n; \mathscr{L})[-n] \rightarrow H^*(\R\P^n; \Z)[-2n] \rightarrow \dots,
\]
where $\mathscr{L}$ denotes the unique non-trivial rank $1$ local system (over $\Z$) on $\R\P^n$.  Note that the cellular cochain complex computing $H^*(\R\P^n; \mathscr{L})$ is the same as that computing $H^*(\R\P^n; \Z)$ but the differentials which were $\pm 2$ become $0$ and vice versa.

The only potentially non-zero differentials in the whole spectral sequence are on this first page and go from $H^n$ in one column to $H^0$ in the adjacent column on the right.  Just focusing on these entries, the page is as shown in \cref{figOhSS}.
\begin{figure}[ht]
\begin{tikzpicture}
\begin{scope}[xshift=-3cm]
\matrix(m)[matrix of math nodes, nodes={inner sep=0, outer sep=0, minimum height=3.2ex, text depth=0ex, minimum width=0ex}, column sep=4.5ex]
{
& & & \ssvdots \\
& & \Z & \Z \\
& & \ssvdots & \\
& \Z & \Z & \\
& \ssvdots & & \\
\Z & \Z & & \\
\ssvdots & & & \\
};
\begin{scope}[every path/.style={->, shorten <= 0.15cm, shorten >= 0.15cm}]
\draw (m-2-3.east)--(m-2-4.west);
\draw (m-4-2.east)--(m-4-3.west);
\draw (m-6-1.east)--(m-6-2.west);
\end{scope}
\end{scope}

\begin{scope}[xshift=3cm]
\matrix(m)[matrix of math nodes, nodes={inner sep=0, outer sep=0, minimum height=3.2ex, text depth=0ex, minimum width=0ex}, column sep=4.5ex]
{
& & & \ssvdots \\
& & \Z & \Z \\
& & \ssvdots & \\
& \Z/2 & 0 & \\
& \ssvdots & & \\
\Z & \Z & & \\
\ssvdots & & & \\
};
\begin{scope}[every path/.style={->, shorten <= 0.15cm, shorten >= 0.15cm}]
\draw (m-2-3.east)--(m-2-4.west);
\draw (m-4-2.east)--(m-4-3.west);
\draw (m-6-1.east)--(m-6-2.west);
\end{scope}
\end{scope}
\end{tikzpicture}
\caption{The interesting part of the Oh spectral sequence for $\R\P^n \subset \C\P^n$ with $n$ odd (left) or even (right).\label{figOhSS}}
\end{figure}
The limit $HF^*_{\mathrm{Zap}, \mu}(\R\P^n, \R\P^n; \Z)$ must be $2$-periodic, by the quantum module action of $h \in QH^*(\C\P^n, b; \Z)$ via \cref{prop Quotient Properties}(\ref{itmd}), and $HF^2_{\mathrm{Zap}, \mu}(\R\P^n, \R\P^n; \Z) \cong H^2(\R\P^n;\Z)$ is $\Z/2$, so all of the $\Z \rightarrow \Z$ differentials have to be multiplication by $\pm 2$, and we obtain
\[
HF^p_{\mathrm{Zap}, \mu}(\R\P^n, \R\P^n; \Z) \cong \begin{cases} \Z/2 & \text{if } p \text{ is even} \\ 0 & \text{if } p \text{ is odd,}\end{cases}
\]
as computed by Zapolsky.  Note that, as expected, this is a module over the even degree part of $\Z[T^{\pm 1}]$, but it is \emph{not} a module over the whole ring if $n$ is even.  Note also that the Floer cohomology is $2$-torsion, as predicted by \cref{lemma: torsion of Floer cohomology}. 
%This is conveniently explained by quantum module action as follows.  The class $2H$ restricts to $0$ in $H^2(\R\P^n; \Z)$ (either by observing that $\R\P^n$ lies in the complement of a quadric representing the Poincar\'e dual of $2H$, or simply because $H^2(\R\P^n; \Z)$ itself is $2$-torsion), so $\CO(2H) = 0$ by \eqref{CODiagram}.  Since $\CO$ is a ring homomorphism and $H$ is invertible, this means that $\CO(2 \cdot 1_{\C\P^n}) = 2 \cdot 1_{\R\P^n}$ must vanish.  Here $1_X$ and $1_L$ denote the units (i.e.~multiplicative identity elements) in $QH^*_\mathrm{Zap}(X)$ and $HF^*_\mathrm{Zap}(L, L)$ respectively.

If we twist by the local system corresponding to the cover $S^n \rightarrow \R\P^n$ then by \cref{prop Twisted Properties}(\ref{itmA}) and (\ref{itmB}) the first page of the spectral sequence becomes
\[
\dots \rightarrow H^*(S^n; \Z)[n] \rightarrow H^*(S^n; \Z) \rightarrow H^*(S^n; \Z)[-n] \rightarrow H^*(S^n; \Z)[-2n] \rightarrow \cdots.
\]
The differentials must all be isomorphisms
%.  This can be computed either directly, by thinking about how the local systems interact with the differentials for $\R\P^n$, or indirectly as follows: 
since the resulting cohomology is still $2$-periodic (it is still a module over quantum cohomology by \cref{prop Twisted Properties}(\ref{itmD})) but the first page is zero in degrees $n-1$ and $n + 2$.%; if $n \geq 3$ then this immediately gives the result, whilst if $n=2$ one just has to think a little about what the differentials could be.

%---------------------------------------------

\subsection{Damian's argument}
\label{subsection Damian}

We conclude this review by outlining the proofs of \eqref{eqDamianOdd} and \eqref{eqDamianEven}, namely that if $L \subset \C\P^n$ is a monotone Lagrangian with $N_L = n+1$ then: for odd $n$, the universal cover $\widetilde{L}$ is homeomorphic to $S^n$ and $\pi_1(L)$ is finite; for even $n$, $\widetilde{L}$ is a $\Z/2$-homology sphere and $\pi_1(L) \cong \Z/2$.  This uses only standard (pre-Zapolsky) Floer theory---in the above language we work only with $\mathscr{C}_\mu^*$, which is trivialised either by fixing an orientation and spin structure on our Lagrangian or by using $\Z/2$ coefficients---and is intended to demonstrate the state of the art before the present work.  It should be contrasted with our new approach, detailed in \cref{section proof}, which completely circumvents the auxiliary construction of the circle bundle.

Let $\Gamma_L$ be the Biran circle bundle associated to $L$: view $\C\P^n$ as a hyperplane in $\C\P^{n+1}$ and take $\Gamma_L$ to be the lift of $L$ to the unit normal bundle of this hyperplane, embedded in $\C^{n+1} \cong \C\P^{n+1} \setminus \C\P^n$.  Consider its mod-$2$ Floer cohomology, lifted to its universal cover $\widetilde{\Gamma}_L$.  Since $\Gamma_L$ is displaceable this Floer cohomology must vanish.  On the other hand, it can be computed by a spectral sequence whose first page is $H^*_c(\widetilde{\Gamma}_L; \Z/2) \cong H_{n+1-*}(\widetilde{\Gamma}_L; \Z/2)$ (compare \cref{prop Twisted Properties}(\ref{itmA}) and (\ref{itmB}) and \cref{rmk Char 2}).  Combining these, we must have
\[
H^*_c(\widetilde{\Gamma}_L; \Z/2) \cong \begin{cases} \Z/2 & \text{if } * = 1 \text{ or } n+1 \\ 0 & \text{otherwise.} \end{cases}
\]
As observed by Schatz \cite[Lemmes 5.3--5.4]{SchatzThesis}, $\widetilde{\Gamma}_L$ is homotopy equivalent to $\widetilde{L}$, so we deduce that $\widetilde{L}$ is a $\Z/2$-homology sphere.  In particular, it is compact so $\pi_1(L)$ is finite.  Moreover, we have
\[
\chi_{\Z/2}(\widetilde{L}) = |\pi_1(L)| \cdot \chi_{\Z/2}(L),
\]
so if $n$ is even then $|\pi_1(L)| = 1$ or $2$.  It can't be $1$, because if $L$ were simply connected then it would have minimal Maslov number $2(n+1)$, so we must have $\pi_1(L) \cong \Z/2$.

When $n$ is odd, the fact that the minimal Maslov number of $\Gamma_L$ is even means that it is orientable (see \cref{remarkMuvsW1}), so its Floer theory can be defined over $\Z$ if it's also spin.  To prove that it is indeed spin we can apply the previous spectral sequence argument to the unlifted Floer cohomology of $\Gamma_L$ over $\Z/2$, and see that
\[
H^*(\Gamma_L; \Z/2) = H^*_c(\Gamma_L; \Z/2) \cong \begin{cases} \Z/2 & \text{if } * = 0 \text{, } 1 \text{, } n \text{, or } n+1 \\ 0 & \text{otherwise,} \end{cases}
\]
so the second Stiefel--Whitney class in $H^2(\Gamma_L; \Z/2)$ must vanish.  We can now go back to studying the Floer theory lifted to $\widetilde{\Gamma}_L$, but this time over $\Z$, and deduce that $\widetilde{L} \cong \widetilde{\Gamma}_L$ is a $\Z$-homology sphere.  Since $\widetilde{L}$ is also simply connected, the homology Whitehead theorem (see e.g. \cite{MayWhitehead}) and the Poincar\'e conjecture imply that $\widetilde{L}$ is homeomorphic to $S^n$.

%===========================================================
\section{Proof of \cref{Theorem1}}
\label{section proof}

\subsection{Preliminaries}

We now detail the proof of \cref{Theorem1}, which uses the full force of Zapolsky's machinery.  Throughout this section we fix the following setup: $L \subset \C\P^n$ is a closed, connected, monotone Lagrangian, of minimal Maslov number $n+1$.  The $n=1$ case of \cref{Theorem1} is trivial so we assume that $n$ is at least $2$.

Before embarking on any Floer theory, we make the following basic topological observation:

\begin{lem}
\label{H1mod2nonzero}
We have $H^1(L; \Z/2) \neq 0$.
\end{lem}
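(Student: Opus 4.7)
The plan is to deduce this from the fact, due to Biran \cite[Theorem A]{BiranNonIntersections}, that under our assumptions there is an additive isomorphism of graded $\Z/2$-vector spaces $H^*(L;\Z/2) \cong H^*(\R\P^n;\Z/2)$. As noted in \cref{subsection previous results}, Biran's proof uses only monotonicity and the minimal Maslov number being $n+1$, not the stronger $2$-torsion hypothesis under which it is originally stated. Given this isomorphism, $H^1(L;\Z/2) \cong H^1(\R\P^n;\Z/2) \cong \Z/2$, which is in particular non-zero.

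For a self-contained proof within Zapolsky's framework, one can reprove Biran's theorem by working over $R = \Z/2$, where assumption (O) is automatic by \cref{rmk Char 2}. The closed--open map $\CO \colon QH^*(\C\P^n;\Z/2) \to HF^*_{\mathrm{Zap}, \mu}(L, L;\Z/2)$ of \cref{prop Quotient Properties}(\ref{itmd}) is unital, and since the hyperplane class $h \in QH^2(\C\P^n;\Z/2)$ is invertible, $\CO(h)$ is invertible. Multiplication by $\CO(h)$ therefore produces a $2$-periodicity isomorphism $HF^*_{\mathrm{Zap}, \mu}(L, L;\Z/2) \xrightarrow{\sim} HF^{*+2}_{\mathrm{Zap}, \mu}(L, L;\Z/2)$. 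I would then combine this $2$-periodicity with the Oh spectral sequence of \cref{prop Quotient Properties}(\ref{itma})--(\ref{itmb}), Poincar\'e duality, and the non-triviality of the unit $1_L$, to force the mod-$2$ Betti numbers of $L$ to match those of $\R\P^n$ and conclude $H^1(L;\Z/2) \neq 0$.

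The key conceptual step is the $2$-periodicity coming from the invertible hyperplane class. The main potential obstacle is the detailed bookkeeping in the Oh spectral sequence---in particular, arguing that the $2$-periodicity on $HF^*$ constrains the $E_1$-page strongly enough to force non-vanishing of $H^1(L;\Z/2)$---but this is exactly what Biran does in his original proof, and nothing beyond its conclusion is needed here.
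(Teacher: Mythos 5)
Your first route---quoting Biran's Theorem A (which, as the paper notes in \cref{subsection previous results}, really only needs monotonicity and $N_L=n+1$) and reading off $H^1(L;\Z/2)\cong\Z/2$---is logically sound but genuinely different from, and much heavier than, what the paper does. The paper's proof is a short piece of soft topology: apply $\Hom_\Z(-,\Z/2)$ to the exact sequence $H_2(\C\P^n;\Z)\to H_2(\C\P^n,L;\Z)\to H_1(L;\Z)\to 0$; the mod-$2$ reduction of $I_{\mu_L}/(n+1)$ is a non-zero element of $\Hom_\Z(H_2(\C\P^n,L;\Z),\Z/2)$ (non-zero precisely because $N_L$ equals $n+1$, so $I_{\mu_L}/(n+1)$ is onto $\Z$), and it vanishes on the image of $H_2(\C\P^n;\Z)$ because there $I_{\mu_L}=2I_{c_1}$, so it comes from a non-zero element of $H^1(L;\Z/2)$. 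This difference matters for the architecture of the paper: \cref{H1mod2nonzero} is exactly the seed used in \cref{HFmod2nonzero} to establish $HF^*_{\mathrm{Zap},\mu}(L,L;\Z/2)\neq 0$ and hence the survival of the unit; and the paper's stated aim is to bypass the circle-bundle construction on which Biran's theorem rests. So citing Biran is a legitimate external appeal, but it runs against both the logical flow and the point of the paper, and it replaces an elementary argument by a deep one.

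Your proposed ``self-contained'' reproof, by contrast, has a genuine gap: it is circular. You list ``the non-triviality of the unit $1_L$'' among your inputs, i.e.\ $HF^*_{\mathrm{Zap},\mu}(L,L;\Z/2)\neq 0$; but the $2$-periodicity induced by the invertible class $\CO(h)$ is vacuously consistent with $HF^*=0$, in which case the spectral sequence forces nothing about $H^1(L;\Z/2)$. The non-vanishing of $HF^*$ is precisely what the paper deduces \emph{from} \cref{H1mod2nonzero} (a non-zero class in $H^1$ survives for degree reasons, so the limit is non-zero and the unit cannot die). In Biran's own argument the required non-trivial input comes from elsewhere: the displaceability of the circle bundle $\Gamma_L\subset\C^{n+1}$ forces the \emph{vanishing} of its Floer homology, and the Gysin sequence converts that into the constraints on $H^*(L;\Z/2)$. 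That input is exactly what is unavailable if you work directly with $L$ and avoid the auxiliary construction, so ``nothing beyond Biran's bookkeeping'' is not enough; you need an independent non-vanishing (or vanishing) statement, and the paper's topological lemma is the cheapest way to get one.
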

\begin{proof}
The long exact sequence in homology for the pair $(\C\P^n, L)$ gives the exact sequence
\[
H_2(\C\P^n; \Z) \rightarrow H_2(\C\P^n, L; \Z) \rightarrow H_1(L; \Z) \rightarrow 0.
\]
Applying the left-exact functor $\Hom_\Z(-, \Z/2)$ we obtain the exact sequence
\[
0 \rightarrow H^1(L; \Z/2) \xrightarrow{f} \Hom_\Z(H_2(\C\P^n, L; \Z), \Z/2) \xrightarrow{g} \Hom_\Z(H_2(\C\P^n; \Z), \Z/2),
\]
and the penultimate term contains the mod $2$ reduction $I'_{\mu_L}$ of $I_{\mu_L}/(n+1)$.  Since $I_{\mu_L}/(n+1)$ restricts to $2I_{c_1}/(n+1)$ on $H_2(\C\P^n; \Z)$, and this is always even, we deduce that $g(I'_{\mu_L})$ is zero.  This means that $I'_{\mu_L}$ is in the image of $f$, and as $I'_{\mu_L}$ itself is non-zero we must have $H^1(L; \Z/2) \neq 0$.
\end{proof}

Now consider the Floer cohomology $HF^*_{\mathrm{Zap}, \mu}(L, L; \Z/2)$. By \cref{prop Quotient Properties}(\ref{itma}) and (\ref{itmb}), and \cref{rmk Char 2}, it is computed by a spectral sequence whose $E_1$ page has $p$th column $H^*(L; \Z/2)[-pn]$.  For degree reasons the only potentially non-zero differentials in the whole spectral sequence map from $H^n(L; \Z/2)[-(p-1)n] \cong \Z/2$ to $H^0(L; \Z/2)[-pn] \cong \Z/2$ on this page, and these maps are independent of $p$ (the $p$-dependence is all contained in the local system $\mathscr{C}^*$, which we are quotienting down to $\mathscr{C}^*_\mu$, and in the signs which are irrelevant mod $2$) so it suffices to understand the $p=0$ case.

From this spectral sequence and the action of $\CO(h)$ (by \cref{prop Quotient Properties}(\ref{itmd})) we obtain the following two lemmas, which are analogous to \cite[Lemmas 6.1.3, 6.1.4]{BiranCorneaRigidityUniruling}.

\begin{lem}
\label{HFmod2nonzero}
We have an isomorphism of graded $\Z/2$-vector spaces
\[
HF^*_{\mathrm{Zap}, \mu}(L, L; \Z/2) \cong \bigoplus_{p=-\infty}^\infty H^*(L; \Z/2)[-p(n + 1)].
\]
That is $HF^k_{\mathrm{Zap}, \mu}(L, L;\Z/2) \cong \oplus_{p = -\infty}^\infty H^{k + (n + 1)p}(L;\Z/2) \cong H^{\ell_k}(L;\Z/2)$, where $\ell_k$ is the unique element of $\{0, \dots, n\}$ congruent to $k$ modulo $n+1$.
Further, $H^k(L;\Z/2) \cong \Z/2$ for all $0 \le k \le n$.
\end{lem}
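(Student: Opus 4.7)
The plan is to show that the Oh spectral sequence computing $HF^*_{\mathrm{Zap}, \mu}(L, L; \Z/2)$ collapses at its first page, and then to exploit the $2$-periodicity coming from the action of $\CO(h)$ in order to identify all the mod $2$ Betti numbers of $L$.

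As noted immediately before the statement, the only potentially non-zero differentials live on $E_1$ and consist of mutually equal maps $d_1 \colon H^n(L;\Z/2) \to H^0(L;\Z/2)$ between successive columns. Since $L$ is a closed, connected $n$-manifold, source and target are both $\Z/2$ by connectedness and mod $2$ Poincar\'e duality, so each $d_1$ is either zero or an isomorphism. Suppose for contradiction that they are all isomorphisms. Then the copy of $H^0(L;\Z/2)$ in column $0$ is killed by the incoming $d_1$ from column $-1$, so $HF^0_{\mathrm{Zap}, \mu}(L, L;\Z/2) = 0$; since the unit $1_L$ lies in this group, the entire ring $HF^*_{\mathrm{Zap}, \mu}(L, L;\Z/2)$ would vanish. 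I would rule this out using \cref{H1mod2nonzero}: the group $H^1(L;\Z/2)$ appears in position $E_1^{0,1}$, and a routine degree check shows that for $n \ge 2$ no non-trivial page differential can have source or target there, since the candidate source and target sit in Morse degrees outside $[0, n]$. Hence $H^1(L;\Z/2)$ survives to $E_\infty$, forcing $HF^1_{\mathrm{Zap}, \mu}(L, L;\Z/2) \neq 0$ and giving the desired contradiction. The spectral sequence therefore collapses at $E_1$, and the direct sum formula $HF^k_{\mathrm{Zap}, \mu}(L, L; \Z/2) \cong \bigoplus_p H^{k+(n+1)p}(L;\Z/2) \cong H^{\ell_k}(L;\Z/2)$ follows by reading off the surviving entry in each total degree.

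To complete the proof it remains to show that $H^k(L;\Z/2) \cong \Z/2$ for all $0 \le k \le n$. The hyperplane class $h$ is a unit in $QH^*(\C\P^n;\Z/2)$ (the relation $h^{n+1} = T^d$ expresses its inverse in terms of a Novikov variable), so by \cref{prop Quotient Properties}(\ref{itmd}) multiplication by $\CO(h)$ yields an isomorphism $HF^k_{\mathrm{Zap}, \mu}(L, L;\Z/2) \cong HF^{k+2}_{\mathrm{Zap}, \mu}(L, L;\Z/2)$. Under the decomposition above this translates into $\dim_{\Z/2} H^{\ell_k}(L;\Z/2) = \dim_{\Z/2} H^{\ell_{k+2}}(L;\Z/2)$ for every integer $k$. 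Tracing the orbit of $k \mapsto k + 2$ under reduction modulo $n + 1$ shows that when $n$ is even this orbit sweeps through all of $\{0, \ldots, n\}$; when $n$ is odd it splits into the even and odd residues, but mod $2$ Poincar\'e duality $H^k(L;\Z/2) \cong H^{n-k}(L;\Z/2)$ interchanges the two halves. Starting from $\dim_{\Z/2} H^0(L;\Z/2) = 1$ now forces $\dim_{\Z/2} H^k(L;\Z/2) = 1$ for every $0 \le k \le n$, completing the argument.

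The main obstacle is the collapse of the spectral sequence: the non-vanishing of $H^1(L;\Z/2)$ together with the unit in $HF^0$ is precisely what rules out the only potentially non-trivial differentials, after which everything else is formal bookkeeping.
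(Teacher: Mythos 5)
Your proposal is correct and follows essentially the same route as the paper: the collapse of the spectral sequence is forced by the survival of the non-zero class in $H^1(L;\Z/2)$ together with unitality of the Floer product, and the Betti numbers are then pinned down by $\CO(h)$-periodicity combined with $H^0(L;\Z/2)\cong H^n(L;\Z/2)\cong\Z/2$. The paper runs the periodicity step slightly more directly (observing $HF^0\cong HF^{-1}\cong\Z/2$ and propagating in both parities) rather than via your orbit-plus-Poincar\'e-duality bookkeeping, but the content is identical.
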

\begin{proof}
By the preceding discussion, to prove the first part it is enough to show that the differential
\[
\partial_1 : H^n(L; \Z/2)[n] \rightarrow H^0(L; \Z/2)
\]
vanishes. Since the codomain comprises just $0$ and the classical unit, we are done if the latter survives the spectral sequence.  But (the PSS image of) the classical unit is also the unit $1_L$ for the Floer product so we simply need to check that $HF^*_{\mathrm{Zap}, \mu}(L, L; \Z/2)$ is non-zero.  To see that this is indeed the case, observe that $H^1(L; \Z/2)$ survives and is non-zero by \cref{H1mod2nonzero}. 

We thus have that $HF^*_{\mathrm{Zap}, \mu}(L, L; \Z/2) \cong \bigoplus_{p=-\infty}^\infty H^*(L; \Z/2)[-p(n + 1)]$. 
In particular, we see that $HF^0_{\mathrm{Zap}, \mu}(L, L;\Z/2) \cong H^0(L;\Z/2) \cong \Z/2$ and $HF^{-1}_{\mathrm{Zap}, \mu}(L, L;\Z/2) \cong H^n(L;\Z/2) \cong \Z/2$. But by invertibility of the hyperplane class $h$ in quantum cohomology, Floer multiplication by $\CO(h)$ gives an isomorphism $HF^k_{\mathrm{Zap}, \mu}(L, L;\Z/2) \cong HF^{k + 2}_{\mathrm{Zap}, \mu}(L, L;\Z/2)$ for every $k \in \Z$ and so we must have $HF^k_{\mathrm{Zap}, \mu}(L, L;\Z/2) \cong \Z/2$ for all $k \in \Z$. This finishes the proof.
\end{proof} 

\begin{lem}
\label{H2}
The group $H^2(L; \Z/2)$ is isomorphic to $\Z/2$ and is generated by $i^*h$, where $i : L \rightarrow \C\P^n$ is the inclusion.  In particular, $L$ is relatively pin and hence satisfies assumption (O).
\end{lem}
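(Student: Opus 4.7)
The plan is to use the commutative diagram \eqref{CODiagram} in degree $2$ to transfer a Floer-theoretic non-vanishing result about $\CO(h)$ into a topological statement about $i^*h$. We already know from \cref{HFmod2nonzero} that $H^2(L; \Z/2) \cong \Z/2$, so to prove the first claim it suffices to show that $i^*h$ is non-zero in this group; it will then automatically generate it.

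To set up the relevant machinery we use \cref{rmk Char 2}, which tells us that over $\Z/2$ we need not assume $L$ is relatively pin or choose any background class in order to define $QH^*(\C\P^n;\Z/2)$, $HF^*_{\mathrm{Zap}, \mu}(L, L; \Z/2)$, and the closed--open map $\CO$. Since $|h| = 2 < n+1 = N_L^{\pi}$, \cref{prop Quotient Properties}(\ref{itmd}) (the mod $2$ analogue of \eqref{CODiagram}) gives
\[
\CO(h) = \mathrm{PSS}(i^*h) \in HF^2_{\mathrm{Zap}, \mu}(L, L; \Z/2).
\]
Next, I would argue that $\CO(h) \neq 0$. Exactly as in the proof of \cref{lemma: torsion of Floer cohomology}, the class $h$ is invertible in $QH^*(\C\P^n; \Z/2)$ since $h^{n+1}$ is a non-zero multiple of the Novikov variable. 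As $\CO$ is a unital algebra homomorphism, it sends invertible elements to invertible elements, and $HF^*_{\mathrm{Zap}, \mu}(L, L; \Z/2)$ is a non-zero ring by \cref{HFmod2nonzero}; hence $\CO(h)$ is invertible and in particular non-zero. Therefore $\mathrm{PSS}(i^*h) \neq 0$, which forces $i^*h \neq 0$ in $H^2(L; \Z/2)$ and proves the first claim.

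The relatively pin property then follows essentially for free: we have shown that $i^* : H^2(\C\P^n; \Z/2) \to H^2(L; \Z/2)$ is surjective, so in particular $w_2(TL)$ lies in its image, and we may take the background class $b$ to be any lift of $w_2(TL)$. As noted in \cref{RelPin}, the existence of a relative pin structure implies assumption (O), completing the proof.

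The main subtlety is simply ensuring that the Floer theory we invoke is defined \emph{before} we have established that $L$ is relatively pin; this is precisely what \cref{rmk Char 2} is for. Everything else is a routine application of the diagram \eqref{CODiagram} combined with the invertibility of the hyperplane class.
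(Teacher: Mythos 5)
Your proposal is correct and follows essentially the same route as the paper: both rest on the commutative square relating $\CO$ to $i^*$, the invertibility of $h$ in $QH^*(\C\P^n;\Z/2)$, the non-vanishing of $HF^*_{\mathrm{Zap},\mu}(L,L;\Z/2)$ from \cref{HFmod2nonzero}, and \cref{rmk Char 2} to have the mod $2$ theory available before relative pin-ness is known. The only (cosmetic) difference is that the paper chases the diagram to conclude that $i^*$ is an isomorphism of $\Z/2$'s, whereas you observe directly that $\mathrm{PSS}(i^*h)=\CO(h)$ is invertible, hence non-zero, hence $i^*h\neq 0$.
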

\begin{proof}
By \cref{HFmod2nonzero} we already know that $H^2(L; \Z/2)$ is isomorphic to $\Z/2$.  Moreover, its proof simultaneously shows the following:
\begin{itemize}
\item the map $\mathrm{PSS}\colon H^2(L;\Z/2) \to HF^2(L, L;\Z/2)$ is an isomorphism (it also shows that this map is well-defined when $n = 2$, i.e. that $H^2(L;\Z/2) \le H^\mathrm{PSS}(L;\Z/2)$);
\item Floer multiplication by $\CO(h)$ gives an isomorphism
\[
HF^0_{\mathrm{Zap}, \mu}(L, L; \Z/2) \xrightarrow{\ \sim \ } HF^2_{\mathrm{Zap}, \mu}(L, L; \Z/2);
\]
\item $HF^0_{\mathrm{Zap}, \mu}(L, L; \Z/2)$ is $\Z/2$, generated by the unit $1_L$.
\end{itemize}
From the latter two items we get that $HF^2_{\mathrm{Zap}, \mu}(L, L;\Z/2)$ is $\Z/2$, generated by $\CO(h) * 1_L = \CO(h)$.
The diagram in \cref{prop Quotient Properties}(\ref{itmd}), relating $i^*$ to $\CO$, then yields the commuting diagram
\begin{equation}
\begin{tikzcd}
H^2(\C\P^n; \Z/2) \arrow{r}{i^*} \arrow{d}[swap]{\mathrm{PSS}} & H^2(L; \Z/2) \arrow{d}{\mathrm{PSS}}
\\ QH^2(\C\P^n; \Z/2) \arrow{r}{\CO} & HF^2_{\mathrm{Zap}, \mu}(L, L; \Z/2)
\end{tikzcd}
\end{equation}
The left-hand vertical map is an isomorphism between $\Z/2$'s, and the above discussion shows that the same is true for the right-hand vertical map and the bottom horizontal map. Hence the top horizontal map is also an isomorphism, which is what we wanted.
\end{proof}

Observe that \cref{HFmod2nonzero} allows us to immediately complete the $n = 2$ case of \cref{Theorem1} since, by the classification of surfaces, $\R\P^2$ is the only closed surface whose first cohomology group with $\Z/2$ coefficients is isomorphic to $\Z/2$.

We are now ready to unleash Floer theory over $\Z$ in order to deal with the general case.
\subsection{The main argument}

From now on we assume that $n \geq 3$, and fix an arbitrary choice of relative pin structure on $L$.

Since assumption (O) is satisfied we can work over $\Z$, and since $N_L \geq 3$ we can twist by any cover $L'$ as in \cref{TwistedCoeffs}, and consider the cohomology $HF^*_{\mathrm{Zap}, \mu}(L, L';\Z)$.  In principle this depends on the choice of relative pin structure but we do not explicitly notate this. By \cref{prop Twisted Properties}(\ref{itmA}) and (\ref{itmB})  the zeroth column of the first page of the Oh spectral sequence which computes $HF^*_{\mathrm{Zap}, \mu}(L, L';\Z)$ is isomorphic to $H^*_c(L'; \Z)$, and for degree reasons all of the intermediate cohomology (meaning $0 < * < n$) survives.  The key result is the following:

\begin{prop}
\label{MainProp}
For any cover $L'$ of $L$ the compactly-supported cohomology groups $H^k_c(L';\Z)$ for $0 < k < n$ are $2$-torsion and $2$-periodic. 
%In particular $H^{2k + 1}(L';\Z) = 0$ for all $0 \le k < \lfloor\frac{n}{2}\rfloor$.
\end{prop}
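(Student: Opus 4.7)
The plan is to combine the quantum module action of $\CO(h)$ on $HF^*_{\mathrm{Zap}, \mu}(L, L'; \Z)$ with the collapse of the Oh spectral sequence of \cref{prop Twisted Properties} in the low-degree range.

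First I would check that for $0 < k < n$ the only non-zero entry of the $E_1$ page in total Floer degree $k$ is the zeroth-column piece $H^k_c(L'; \Z)$. Indeed, a bigrading computation (using $N_L^\pi = n+1$) rules out contributions from non-trivial $T$-powers, and in the odd-minimal-Maslov case the extra $\mathscr{R}'$-summand is shifted by $n+1$ and so contributes nothing for $k < n$. By the same degree count, the differentials in and out of $(0, k)$ vanish in this range, so this entry survives to $E_\infty$, giving canonical isomorphisms
\[
HF^k_{\mathrm{Zap}, \mu}(L, L'; \Z) \cong H^k_c(L'; \Z) \quad \text{for } 0 < k < n.
\]
Specialising to $L' = L$ yields $HF^1_{\mathrm{Zap}, \mu}(L, L; \Z) \cong H^1(L; \Z)$.

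Next I would invoke the Auroux--Kontsevich--Seidel criterion, which forces $\CO(2(n+1) h) = 0$ in $HF^*_{\mathrm{Zap}, \mu}(L, L; \Z)$. Since $\CO(h)$ acts invertibly (as $h$ is a unit in $QH^*(\C\P^n, b; \Z)$), this makes $HF^*_{\mathrm{Zap}, \mu}(L, L; \Z)$ entirely $2(n+1)$-torsion, and hence $H^1(L; \Z)$ is $2(n+1)$-torsion. But $H^1$ of any space is torsion-free, so $H^1(L; \Z) = 0$. Applying \cref{lemma: the restriction of the hyperplane class to a lag in cpn} with $N_L = n+1$ then gives that $i^*h$ has order exactly $2$ in $H^2(L; \Z)$; and since $n \geq 3$ the commutative diagram in \cref{prop Quotient Properties}(\ref{itmd}) yields $\CO(h) = \mathrm{PSS}(i^*h)$, so $\CO(2h) = \mathrm{PSS}(2 i^*h) = 0$.

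Finally I would use that $HF^*_{\mathrm{Zap}, \mu}(L, L'; \Z)$ is a right module over $QH^*(\C\P^n, b; \Z)$ via $\CO$ (\cref{prop Twisted Properties}(\ref{itmD})). The operator $\CO(h)$ gives an isomorphism $HF^k_{\mathrm{Zap}, \mu}(L, L'; \Z) \xrightarrow{\sim} HF^{k+2}_{\mathrm{Zap}, \mu}(L, L'; \Z)$ for every $k$, establishing $2$-periodicity; and writing any element as $\CO(h) \cdot \beta$, the relation $\CO(2h) = 0$ forces $2\alpha = 0$ throughout. Combining these conclusions with the identifications from the first step yields the required $2$-torsion and $2$-periodicity of $H^k_c(L'; \Z)$ for $0 < k < n$. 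The main obstacle will be the bigrading bookkeeping in the first step, which requires separate handling of the even and odd $N_L^\pi$ cases; once the spectral sequence is known to collapse onto $H^*_c(L')$ in the intermediate range, the remainder is a short deployment of the quantum module structure and the Auroux--Kontsevich--Seidel vanishing.
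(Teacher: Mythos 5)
Your proposal is correct and follows essentially the same route as the paper: the Auroux--Kontsevich--Seidel vanishing kills $H^1(L;\Z)$, \cref{lemma: the restriction of the hyperplane class to a lag in cpn} then gives $\CO(2h)=0$, and the quantum module action of the invertible class $\CO(h)$ delivers both $2$-periodicity and $2$-torsion on the surviving intermediate cohomology of each cover. The only cosmetic differences are that you inline the content of \cref{lemma: torsion of Floer cohomology} rather than citing it, and you spell out the degree bookkeeping behind the identification $HF^k_{\mathrm{Zap},\mu}(L,L';\Z)\cong H^k_c(L';\Z)$ for $0<k<n$, which the paper leaves as ``for degree reasons.''
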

\begin{proof}
Since these intermediate cohomology groups survive to $HF^*_{\mathrm{Zap}, \mu}(L, L'; \Z)$, they are acted upon by the invertible element $\CO(h)$ of degree $2$. This gives us $2$-periodicity.

To prove $2$-torsion, first note that since the minimal Maslov number is greater than $2$, the well-known argument of Auroux, Kontsevich and Seidel (\cite[Proposition 6.8]{AurouxMSandTduality}, \cite[Lemma 2.7]{SheridanFano}) implies that $\CO(2c_1(\C\P^n)) = 0$, that is $2(n+1)\CO(h) = 0$.  
Another way to see this is to note that $\CO(2c_1(\C\P^n)) = \mathrm{PSS}(i^*(2c_1(\C\P^n)))$ which vanishes since $2c_1(\C\P^n) = j^*(\mu_L)$ by \eqref{eq Viterbo}.

Taking $L' = L$ and applying invertibility of $h$ again, we see that $HF^*_{\mathrm{Zap}, \mu}(L, L;\Z)$, and hence the intermediate cohomology of $L$, is $2(n+1)$-torsion. But by the universal coefficients theorem, $H^1(L; \Z)$ is torsion-free, so it must vanish. 
%Combining this with $2$-periodicity, we get $H^{2k + 1}(L';\Z) = 0$ for all $0 \le k < \lfloor\frac{n}{2}\rfloor$.
We can now apply \cref{lemma: torsion of Floer cohomology} to see that $HF^*_{\mathrm{Zap}, \mu}(L, L; \Z)$ is $2$-torsion. By \cref{prop Twisted Properties}(\ref{itmD}), for each cover $L'$ the cohomology $HF^*_{\mathrm{Zap}, \mu}(L, L'; \Z)$ is a (unital) module over the ring $HF^*_{\mathrm{Zap}, \mu}(L, L; \Z)$, so the former must also be $2$-torsion.  This in turn means that the intermediate compactly-supported cohomology groups of each $L'$ are $2$-torsion.
\end{proof}

Finally we complete the proof of \cref{Theorem1}, by showing that $L$ has fundamental group $\Z/2$ and universal cover homeomorphic to $S^n$:

\begin{proof}[Proof of \cref{Theorem1}]
%Fix an arbitrary basepoint $q$ in $L$, and 
Let $L^+$ denote the minimal orientable cover of $L$, meaning $L$ itself, if it is orientable, or the orientable double cover otherwise. 
Apply \cref{MainProp} to every connected cover $L'$ of $L^+$ to see that for every such cover the group $H^{n - 1}_c(L';\Z)$ is $2$-torsion. Since $L'$ is orientable, Poincar\'{e} duality tells us that $H^{n - 1}_c(L';\Z)$ is isomorphic to $H_1(L';\Z)$ and so the latter is $2$-torsion.

 By the Hurewicz theorem, this means that every subgroup of $\pi_1(L^+)$ has $2$-torsion abelianisation.  In particular, by considering the cyclic subgroups, we see that every element of $\pi_1(L^+)$ has order $2$, so the group is abelian (every commutator $aba^{-1}b^{-1}$ is square $(ab)^2$ and hence equal to the identity). We deduce that $\pi_1(L^+)$ is isomorphic to $H_1(L^+; \Z)$ and is $2$-torsion. It is also finitely-generated (since $L^+$ is compact) and therefore finite. Hence $\pi_1(L)$ is finite as well.
 
Consider now the universal cover $\widetilde{L}$ of $L$, which is compact by the above discussion. By Hurewicz and universal coefficients, $H^1(\widetilde{L};\Z)$ vanishes and $H^2(\widetilde{L};\Z)$ is torsion-free. Then \cref{MainProp} shows that $\widetilde{L}$ is an integral homology sphere, so as in \cref{subsection Damian} it is homeomorphic to $S^n$.

We are now in a position to finish the proof by showing that $\pi_1(L)$ is $\Z/2$. Suppose first that $L$ is orientable, in which case we replace $L^+$ by $L$ in the above to see that $\pi_1(L)$ is finite, abelian and $2$-torsion and hence $\pi_1(L) \cong (\Z/2)^k$ for some $k \in \mathbb{N}$. On the other hand, by \cref{HFmod2nonzero}, we know that $H^1(L;\Z/2) \cong \Z/2$ and so $k = 1$.
Suppose now that $L$ is non-orientable. Then by \cref{remarkMuvsW1} we see that $n$ must be even. But then $\pi_1(L)$ is a non-trivial group which acts freely on the even-dimensional sphere $\widetilde{L}$ and so we must have $\pi_1(L) \cong \Z/2$. 
\end{proof}

\bibliography{RPnBibliography}

\providecommand{\href}[2]{#2}\begingroup\raggedright\begin{thebibliography}{10}

\bibitem{AbouzaidNearbyLagrangians}
M.~Abouzaid, ``Nearby {L}agrangians with vanishing {M}aslov class are homotopy
  equivalent,'' \href{http://dx.doi.org/10.1007/s00222-011-0365-0}{{\em Invent.
  Math.} {\bfseries 189} no.~2, (2012) 251--313}.
  \url{https://doi.org/10.1007/s00222-011-0365-0}.

\bibitem{Albers}
P.~Albers, ``A {L}agrangian {P}iunikhin-{S}alamon-{S}chwarz morphism and two
  comparison homomorphisms in {F}loer homology,''
  \href{http://dx.doi.org/10.1093/imrn/rnm134}{{\em Int. Math. Res. Not. IMRN}
  no.~4, (2008) Art. ID rnm134, 56}. \url{https://doi.org/10.1093/imrn/rnm134}.

\bibitem{AurouxMSandTduality}
D.~Auroux, ``Mirror symmetry and {$T$}-duality in the complement of an
  anticanonical divisor,'' {\em J. G\"okova Geom. Topol. GGT} {\bfseries 1}
  (2007) 51--91.

\bibitem{BanyagaHurtubiseSpaeth}
A.~Banyaga, D.~Hurtubise, and P.~Spaeth, ``Twisted {Morse} complexes,''
  \href{http://arxiv.org/abs/1911.07818}{{\ttfamily arXiv:1911.07818
  [math.AT]}}.

\bibitem{BiranNonIntersections}
P.~Biran, ``Lagrangian non-intersections,''
  \href{http://dx.doi.org/10.1007/s00039-006-0560-0}{{\em Geom. Funct. Anal.}
  {\bfseries 16} no.~2, (2006) 279--326}.
  \url{https://doi.org/10.1007/s00039-006-0560-0}.

\bibitem{BiranCieliebak}
P.~Biran and K.~Cieliebak, ``Symplectic topology on subcritical manifolds,''
  \href{http://dx.doi.org/10.1007/s00014-001-8326-7}{{\em Comment. Math. Helv.}
  {\bfseries 76} no.~4, (2001) 712--753}.
  \url{https://doi.org/10.1007/s00014-001-8326-7}.

\bibitem{BiranCorneaQS}
P.~Biran and O.~Cornea, ``Quantum structures for {Lagrangian} submanifolds,''
  \href{http://arxiv.org/abs/0708.4221v1}{{\ttfamily arXiv:0708.4221v1
  [math.SG]}}.

\bibitem{BiranCorneaRigidityUniruling}
P.~Biran and O.~Cornea, ``Rigidity and uniruling for {L}agrangian
  submanifolds,'' \href{http://dx.doi.org/10.2140/gt.2009.13.2881}{{\em Geom.
  Topol.} {\bfseries 13} no.~5, (2009) 2881--2989}.
  \url{https://doi.org/10.2140/gt.2009.13.2881}.

\bibitem{BormanLiWuSphericalLags}
M.~S. Borman, T.-J. Li, and W.~Wu, ``Spherical {L}agrangians via ball packings
  and symplectic cutting,''
  \href{http://dx.doi.org/10.1007/s00029-013-0120-z}{{\em Selecta Math. (N.S.)}
  {\bfseries 20} no.~1, (2014) 261--283}.
  \url{https://doi.org/10.1007/s00029-013-0120-z}.

\bibitem{Campana}
F.~Campana, ``On twistor spaces of the class {$\mathscr C$},'' {\em J.
  Differential Geom.} {\bfseries 33} no.~2, (1991) 541--549.
  \url{http://projecteuclid.org/euclid.jdg/1214446329}.

\bibitem{CappellShaneson}
S.~E. Cappell and J.~L. Shaneson, ``Some new four-manifolds,''
  \href{http://dx.doi.org/10.2307/1971056}{{\em Ann. of Math. (2)} {\bfseries
  104} no.~1, (1976) 61--72}. \url{https://doi.org/10.2307/1971056}.

\bibitem{Damian}
M.~Damian, ``Floer homology on the universal cover, {Audin's} conjecture and
  other constraints on {Lagrangian} submanifolds,''
  \href{http://dx.doi.org/10.4171/CMH/259}{{\em Comment. Math. Helv.}
  {\bfseries 87} no.~2, (2012) 433--462}.
  \url{https://doi.org/10.4171/CMH/259}.

\bibitem{FukayaSeidelSmith}
K.~Fukaya, P.~Seidel, and I.~Smith, ``Exact {L}agrangian submanifolds in
  simply-connected cotangent bundles,''
  \href{http://dx.doi.org/10.1007/s00222-007-0092-8}{{\em Invent. Math.}
  {\bfseries 172} no.~1, (2008) 1--27}.
  \url{https://doi.org/10.1007/s00222-007-0092-8}.

\bibitem{HirschMilnorInvolutionsOfSpheres}
M.~W. Hirsch and J.~Milnor, ``Some curious involutions of spheres,''
  \href{http://dx.doi.org/10.1090/S0002-9904-1964-11103-4}{{\em Bull. Amer.
  Math. Soc.} {\bfseries 70} (1964) 372--377}.
  \url{https://doi.org/10.1090/S0002-9904-1964-11103-4}.

\bibitem{KollarMiyaokaMori}
J.~Koll\'{a}r, Y.~Miyaoka, and S.~Mori, ``Rational connectedness and
  boundedness of {Fano} manifolds,'' {\em J. Differential Geom.} {\bfseries 36}
  no.~3, (1992) 765--779. \url{http://projecteuclid.org/euclid.jdg/1214453188}.

\bibitem{KonstantinovThesis}
M.~Konstantinov, {\em Symplectic topology of projective space: {Lagrangians},
  local systems and twistors}.
\newblock PhD thesis, University College London, 2019.
\newblock \url{https://discovery.ucl.ac.uk/id/eprint/10077487/}.

\bibitem{KraghRingSpectra}
T.~Kragh, ``Parametrized ring-spectra and the nearby {L}agrangian conjecture,''
  \href{http://dx.doi.org/10.2140/gt.2013.17.639}{{\em Geom. Topol.} {\bfseries
  17} no.~2, (2013) 639--731}. \url{https://doi.org/10.2140/gt.2013.17.639}.
  With an appendix by Mohammed Abouzaid.

\bibitem{KraghHomotopyEquivalenceSerre}
T.~Kragh, ``Homotopy equivalence of nearby {L}agrangians and the {S}erre
  spectral sequence,'' \href{http://dx.doi.org/10.1007/s00208-016-1447-5}{{\em
  Math. Ann.} {\bfseries 368} no.~3-4, (2017) 945--970}.
  \url{https://doi.org/10.1007/s00208-016-1447-5}.

\bibitem{LivesayInvolutionsOfS3}
G.~R. Livesay, ``Fixed-point-free involutions on the {$3$}-sphere,'' in {\em
  Topology of 3-manifolds and related topics ({P}roc. {T}he {U}niv. of
  {G}eorgia {I}nstitute, 1961)}, p.~p. 220.
\newblock Prentice-Hall, Englewood Cliffs, N.J., 1962.

\bibitem{MayWhitehead}
J.~P. May, \href{http://dx.doi.org/10.1017/CBO9780511600760.004}{``The dual
  {W}hitehead theorems,''} in {\em Topological topics}, vol.~86 of {\em London
  Math. Soc. Lecture Note Ser.}, pp.~46--54.
\newblock Cambridge Univ. Press, Cambridge, 1983.
\newblock \url{https://doi.org/10.1017/CBO9780511600760.004}.

\bibitem{SmallMcDuffSalamon}
D.~McDuff and D.~Salamon, \href{http://dx.doi.org/10.1090/ulect/006}{{\em
  {$J$}-holomorphic curves and quantum cohomology}}, vol.~6 of {\em University
  Lecture Series}.
\newblock American Mathematical Society, Providence, RI, 1994.
\newblock \url{https://doi.org/10.1090/ulect/006}.

\bibitem{NadlerMicrolocalBranes}
D.~Nadler, ``Microlocal branes are constructible sheaves,''
  \href{http://dx.doi.org/10.1007/s00029-009-0008-0}{{\em Selecta Math. (N.S.)}
  {\bfseries 15} no.~4, (2009) 563--619}.
  \url{https://doi.org/10.1007/s00029-009-0008-0}.

\bibitem{OhSpectralSequence}
Y.-G. Oh, ``Floer cohomology, spectral sequences, and the {Maslov} class of
  {Lagrangian} embeddings,''
  \href{http://dx.doi.org/10.1155/S1073792896000219}{{\em Internat. Math. Res.
  Notices} no.~7, (1996) 305--346}.
  \url{https://doi.org/10.1155/S1073792896000219}.

\bibitem{PSS}
S.~Piunikhin, D.~Salamon, and M.~Schwarz, ``Symplectic {F}loer-{D}onaldson
  theory and quantum cohomology,'' in {\em Contact and symplectic geometry
  ({C}ambridge, 1994)}, vol.~8 of {\em Publ. Newton Inst.}, pp.~171--200.
\newblock Cambridge Univ. Press, Cambridge, 1996.

\bibitem{SchatzThesis}
S.~Schatz, {\em Sur la topologie des {sous-vari\'et\'es} {Lagrangiennes}
  monotones de l'espace projectif complexe}.
\newblock PhD thesis, Strasbourg, 2016.

\bibitem{SeidelGraded}
P.~Seidel, ``Graded {Lagrangian} submanifolds,'' {\em Bull. Soc. Math. France}
  {\bfseries 128} no.~1, (2000) 103--149.
  \url{http://www.numdam.org/item?id=BSMF_2000__128_1_103_0}.

\bibitem{SeidelBook}
P.~Seidel, \href{http://dx.doi.org/10.4171/063}{{\em Fukaya categories and
  {Picard--Lefschetz} theory}}.
\newblock Zurich Lectures in Advanced Mathematics. European Mathematical
  Society (EMS), Z\"urich, 2008.
\newblock \url{https://doi.org/10.4171/063}.

\bibitem{SheridanFano}
N.~Sheridan, ``On the {F}ukaya category of a {F}ano hypersurface in projective
  space,'' \href{http://dx.doi.org/10.1007/s10240-016-0082-8}{{\em Publ. Math.
  Inst. Hautes \'Etudes Sci.} {\bfseries 124} (2016) 165--317}.
  \url{https://doi.org/10.1007/s10240-016-0082-8}.

\bibitem{SmithThesis}
J.~Smith, {\em Symmetry in monotone {Lagrangian} {Floer} theory}.
\newblock PhD thesis, University of Cambridge, 2017.
\newblock \url{https://doi.org/10.17863/CAM.13678}.

\bibitem{Zapolsky}
F.~Zapolsky, ``The {Lagrangian} {Floer-quantum-PSS} package and canonical
  orientations in {Floer} theory,''
  \href{http://arxiv.org/abs/1507.02253v2}{{\ttfamily arXiv:1507.02253v2
  [math.SG]}}.

\bibitem{ZingerDeterminantLine}
A.~Zinger, ``The determinant line bundle for {Fredholm} operators:
  Construction, properties, and classification,''
  \href{http://dx.doi.org/10.7146/math.scand.a-23687}{{\em Math. Scand.}
  {\bfseries 118} no.~2, (2016) 203--268}.
  \url{https://doi.org/10.7146/math.scand.a-23687}.

\end{thebibliography}\endgroup
\bibliographystyle{utcapsor2}

\end{document}